\def\@settitle{\begin{flushleft}%
  \baselineskip14\p@\relax
    \normalfont\Large\bf
  \@title
  \end{flushleft}%
}
\def\section{\@startsection{section}{1}%
  \z@{.7\linespacing\@plus\linespacing}{.5\linespacing}%
  {\normalfont\bf}}
\def\@setauthors{%
  \begingroup
  \def\thanks{\protect\thanks@warning}%
  \trivlist
  \footnotesize \@topsep30\p@\relax
  \advance\@topsep by -\baselineskip
  \item\relax
  \author@andify\authors
  \def\\{\protect\linebreak}%
  \larger\sc\authors%
  \ifx\@empty\contribs
  \else
    ,\penalty-3 \space \@setcontribs
    \@closetoccontribs
  \fi
  \endtrivlist
  \endgroup
}
\theoremstyle{plain}
\newtheorem{theorem}{Theorem}[section]
\newtheorem{lemma}[theorem]{Lemma}
\newtheorem{proposition}[theorem]{Proposition}
\newtheorem{corollary}[theorem]{Corollary}
\theoremstyle{definition}
\newtheorem*{notation}{Notation}
\newcommand{\codim}{\operatorname{codim}}
\newcommand{\Aut}{\operatorname{Aut}}
\newcommand{\PGL}{\operatorname{PGL}}
\newcommand{\Char}{\operatorname{char}\,}
\newcommand{\Chi}{\mathcal X}
\newcommand{\Sym}{\operatorname{Sym}}
\newcommand{\SL}{\operatorname{SL}}
\newcommand{\SO}{\operatorname{SO}}
\newcommand{\GG}{\mathbb{G}}
\newcommand{\PP}{\mathbb{P}}
\newcommand{\Wt}{\widetilde{W}}
\newcommand{\Bnn}{\mathfrak{B}_{00}}
\newcommand{\Bn}{\mathfrak{B}_{0}}
\newcommand{\B}{\mathfrak{B}}
\newcommand{\sA}{{\mathsf A}}
\newcommand{\sB}{{\mathsf B}}
\newcommand{\sG}{{\mathsf G}}
\begin{document}
\title{On the {\boldmath$W$}-action on {\boldmath$B$}-sheets in positive characteristic}
\author{Friedrich Knop, Guido Pezzini\newline FAU Erlangen-N\"urnberg}
\address{FAU Erlangen-N\"urnberg, Dept. Mathematik, Cauerstr. 11, 91058 Erlangen, Germany}
\subjclass[2010]{20G15, 14M17, 14L30, 20G05}

\begin{abstract}

  Let $G$ be a connected reductive group defined over an algebraically
  closed base field of characteristic $p\ge0$, let $B\subseteq G$ be
  a Borel subgroup, and let $X$ be a $G$-variety. We denote the
  (finite) set of closed $B$-invariant irreducible subvarieties of $X$ that are
  of maximal complexity by $\Bn(X)$. The first named author has
  shown that for $p=0$ there is a natural action of the Weyl group $W$
  on $\Bn(X)$ and conjectured that the same construction yields a
  $W$-action whenever $p\ne2$. In the present paper, we prove this
  conjecture.

\end{abstract}

\maketitle

\section{Introduction}

Let $G$ be a connected reductive group defined over an algebraically closed
base field $\Bbbk$ with Borel subgroup $B\subseteq G$. For any
$G$-variety $X$ let $\B(X)$ be the set of all $B$-stable irreducible closed
subvarieties
of $X$. The \emph{complexity} $c(Y)$ of $Y\in\B(X)$ is the codimension
of a generic $B$-orbit in $Y$ or, equivalently, the transcendence
degree of $\Bbbk(Y)^B$. It is a result of Vinberg \cite{Vi86} that the
complexity takes its maximal value for $Y=X$. Of particular interest
is therefore the subset
\[
\Bn(X):=\{Y\in\B(X)\mid c(Y)=c(X)\}.
\]
This set contains $X$ and is finite since it consists of the closures
of all $B$-sheets with a maximal number of parameters
(see \cite[Proposition 4.1]{Kn95}). The most
important case is that of a spherical variety (i.e. $c(X)=0$) when
$\Bn(X)=\B(X)$ is just the set of all $B$-orbit closures.

Let $W$ be the Weyl group of $G$. In \cite{Kn95}, an action of $W$ on
$\Bn(X)$ was constructed whenever the base field has characteristic
zero. On the other side, in the same paper an example was given
showing that the construction does not work in characteristic $2$.  In
any other characteristic, the situation was unclear so far. The
purpose of this paper is to close this gap by showing that the method
of \cite{Kn95} does indeed define a $W$-action on $\Bn(X)$ in every
characteristic $\ne2$.

It was already indicated \cite{Kn95} that the problem can be reduced
to the following special case: the characteristic $p$ of $\Bbbk$ is
$>2$, the group $G$ is semisimple of rank $2$, and the variety $X$ is
of the form $X=G/H$ where $H$ is a connected non-spherical subgroup of
$G$. Moreover, we may replace $\Bn(X)$ by a certain subset $\Bnn(X)$
(see \S\ref{s:action} for its definition).

It is then enough to consider only those $H$ for which $\Bnn(G/H)$
consists of more than one element. Remarkably, under such assumptions
the proof can be completed by showing that there exist a
spherical subgroup $K\subseteq G$ and a bijection $\Bnn(G/H)\to
\Bnn(G/K)$ that is compatible with the operation of the simple
reflections of $W$.

While several subgroups require case-by-case considerations, others
can be treated with general arguments, e.g.\ solvable subgroups.

\begin{notation}
  All varieties are defined over an algebraically closed field $\Bbbk$
  of characteristic $p\ge0$. We denote by $G$ a connected reductive
  group, we fix a Borel subgroup $B\subseteq G$, whose unipotent
  radical is denoted by $U$, and a maximal torus $T\subseteq
  B$. The opposite Borel subgroup with respect to $T$, and its
  unipotent radical, are denoted by $B^{-}$ and $U^{-}$, respectively.
  
  Denote by $R$ the set of roots with respect to $T$, by $R^{+}$ the
  set of positive roots corresponding to $B$, and by $S\subseteq R$
  the set of simple roots.
  If $G$ is simple, then
  the simple roots $\alpha_1,\alpha_2,\ldots$ and the fundamental
  dominant weights $\omega_1,\omega_2,\ldots$ will be
  numbered as in \cite[Planches I--IX]{Bou}. The $1$-dimensional unipotent subgroup of
  $G$ associated to a root $\gamma$ is
  denoted by $U_\gamma$, and we choose once and for all an isomorphism
  $u_\gamma:\GG_a\to U_\gamma$. The Weyl group of $G$ is denoted by $W$,  
  its longest element is denoted by $w_0$,  and the simple reflection
  associated to $\alpha\in S$ is denoted by $s_\alpha$.

If $\omega$ is a dominant weight, then $V_G(\omega)$ denotes the
irreducible $G$-module of highest weight $\omega$. If no confusion arises,
we simply write $V(\omega)$.

  If  $\alpha$ is a simple root of $G$ then $P_\alpha$ denotes the
  minimal parabolic subgroup of $G$ which is generated by $B$ and $U_{-\alpha}$,
  and $\pi_\alpha\colon G/B\to G/P_\alpha$ the natural map
  $gB\mapsto gP_\alpha$. If $g\in G$ and $H\subseteq G$, we use the
  notation ${}^gH= gHg^{-1}$. For any algebraic group $H$, we denote
  by $H'$   its commutator subgroup, by $H^r$ (resp.\ $H^u$) its
  radical (resp.\ unipotent radical), and by $\Chi(H)$ its group of
  characters,
  i.e.\ the set of all algebraic group homomorphisms $H\to \GG_m$.
\end{notation}
\section{The action of the Weyl group}\label{s:action}

We recall some definitions and facts from \cite{Kn95}. Let $X$ be an
algebraic variety equipped with a $G$-action. To avoid
confusion, a $B$-stable irreducible closed subvariety $Z$ of $X$
is denoted sometimes by $(Z)$ if we are referring to it as an
element of $\B(X)$.

We define the {\em
character group} $\Chi(Z)$ of $Z$ as the group of $B$-eigenvalues of
$B$-eigenvectors in $\Bbbk(Z)$. The {\em rank} $r(Z)$ of $Z$ is the
rank of the free abelian group $\Chi(Z)$.

We also define the following subset of $\Bn(X)$:
\[
\begin{array}{ccl}
\Bnn(X) & = & \left\{ (Z) \in \B(X) \; \middle\vert\;
c(Z) = c(X),\; r(Z) = r(X) \right\}.
\end{array}
\]

If $X=G/H$ is homogeneous, then there is a canonical bijection between
$\B(X)$ and the set of $H$-stable $H$-irreducible closed subsets of
$G/B$. Here ``$H$-irreducible'' means that $H$ acts transitively on
the set of irreducible components. Throughout the paper we will
sometimes implicitly make use of this bijection.

Let $\Wt$ be the group defined by generators $s_\alpha, \alpha\in S$ and relations $s_\alpha^{2}=e, \alpha\in S$.
In \cite{Kn95} an action of
$\Wt$ of $\Bn(X)$ has been defined as follows. Let $\alpha$ be a simple
root, and recall that $P_\alpha\supset
B$ is the minimal parabolic subgroup of $G$ corresponding to
$\alpha$. Then $Z\mapsto P_\alpha Z$ is an idempotent selfmap of
$\Bn(X)$. Its image $\Bn^\alpha(X)$ consists of those $Z\in\Bn(X)$
which are $P_\alpha$-stable. Thus, for $Y\in\Bn^\alpha(X)$ the fibers

\[
\Bn(Y, P_\alpha) = \left\{ Z \in \Bn(X) \; \middle\vert\; P_\alpha Z = Y
\right\}
\]
form partition of $\Bn(X)$. Now, the element $s_\alpha$ acts on
each block $\Bn(Y, P_\alpha)$ as an involution according to the
following table:
\[
\begin{array}{clll}
\textrm{Type} & 
\Bn(Y,P_\alpha)  && s_\alpha\textrm{--action}\\
\noalign{\smallskip}\hline\noalign{\smallskip}
(G) & \{Y \}  && s_\alpha \cdot (Y) = (Y) \\
(U) & \{Y, Z \}, \; &r(Z)=r(Y)  & s_\alpha \cdot (Y) = (Z),\ 
s_\alpha \cdot (Z) = (Y) \\
(N) & \{Y, Z \}, \; &r(Z) < r(Y)  & s_\alpha \cdot (Y) = (Y), \ 
s_\alpha \cdot (Z) = (Z) \\
(T) & \{Y, Z_0,Z_\infty \},&r(Z_0) =r(Z_\infty) < r(Y)  &
s_\alpha \cdot (Y) = (Y),\ s_\alpha \cdot (Z_0) = (Z_\infty)\\
&& & s_\alpha \cdot (Z_\infty) = (Z_0)
\end{array}
\]
No other cases can occur (see \cite[\S4]{Kn95}). This definition is based on a construction of Lusztig and Vogan in the case of symmetric spaces (see \cite[\S3]{LV83}). Under the hypothesis that $B$ has a dense orbit on $G/H$ (in which case $G/H$ is called a {\em spherical} homogeneous space, and $H$ a {\em spherical} subgroup of $G$), the link is explained in more details in \cite[\S1 and \S5]{Kn95}.

Let us further analyze the four above cases in case that $X=G/H$ is a homogeneous variety.

Suppose that $Y\in \Bn(G/H)$ is $P_\alpha$-stable. Then, considered
as an $H$-stable subset of $G/B$, it satisfies $\pi^{-1}(\pi(Y))=Y$
where $\pi= \pi_\alpha\colon G/B\to G/P_\alpha$.

The fiber $\pi^{-1}(x)$ over any $x\in\pi(Y)$ is isomorphic to $\PP^1$
and equipped with a natural transitive action of $G_x$; we fix the
isomorphism $\pi^{-1}(x)\cong\PP^1$ and the corresponding homomorphism
$\Phi\colon G_x\twoheadrightarrow\PGL(2)=\Aut\PP^1$. The fiber $\pi^{-1}(x)$
intersects each $Z\in\Bn(Y,P_\alpha)$ in an $H_x$-stable subset. If
$x$ is general in $\pi(Y)$ then the four cases of the above table
correspond resp.\ to the following:

\begin{itemize}

\item[$(G)$]$\Phi(H_x)$ is $\PGL(2)$ or finite,

\item [$(U)$]the unipotent radical of $\Phi(H_x)$ is a maximal unipotent subgroup of
$\PGL(2)$,

\item [$(N)$]$\Phi(H_x)$ has trivial unipotent radical and
$|\Bn(Y,P_\alpha)| = 2$,

\item [$(T)$]$\Phi(H_x)$ is a maximal torus of $\PGL(2)$.

\end{itemize}

In \cite{Kn95} it is shown that the $\Wt$-action on $\Bn(G/H)$ descends
to an action of the Weyl group $W$ of $G$ under several different
assumptions, see \cite[Theorem 4.2]{Kn95}. In particular, this is true
if $H$ is a spherical subgroup of $G$ and $\Char\Bbbk \neq 2$.

We come to our main result.

\begin{theorem} \label{thm:waction}
Let $\Char \Bbbk >2$ and let $G$ be semisimple of rank $2$. For
any connected subgroup $H\subseteq G$, the $\Wt$-action defined
above induces a $W$-action on $\Bnn(G/H)$.
\end{theorem}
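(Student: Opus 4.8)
The plan is to reduce the theorem to the verification of a single braid relation, and then to import that relation from an already-understood spherical model. Since $G$ is semisimple of rank $2$, its Weyl group $W$ is the dihedral group generated by the two simple reflections $s_1=s_{\alpha_1}$ and $s_2=s_{\alpha_2}$, subject to $s_i^2=e$ together with the one braid relation $\underbrace{s_1s_2s_1\cdots}_{m}=\underbrace{s_2s_1s_2\cdots}_{m}$, where $m\in\{2,3,4,6\}$ according to whether $G$ is of type $A_1\times A_1$, $A_2$, $B_2$, or $G_2$. The abstract group $\Wt$ imposes only the relations $s_i^2=e$, so the $\Wt$-action on $\Bnn(G/H)$ factors through $W$ if and only if this braid relation is respected. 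Thus everything comes down to checking one identity between two permutations of the finite set $\Bnn(G/H)$.

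Before attacking this I would first record that on $\Bnn(G/H)$ the simple reflections act in a particularly restricted way. Inspecting the table, a reflection $s_\alpha$ either fixes an element $Z$ (types $(G)$, $(N)$, $(T)$, where any member of $\Bnn$ is forced to be the rank-maximal element $Y$ of its $P_\alpha$-block) or swaps it with a partner of equal rank (type $(U)$); in particular $s_\alpha$ preserves $\Bnn(G/H)$, so the restriction of the $\Wt$-action is well defined. Two cases are then immediate: if $H$ is spherical the conclusion is \cite[Theorem 4.2]{Kn95} (using $\Char\Bbbk\neq2$), and if $|\Bnn(G/H)|=1$ the unique element is $G/H$ itself, fixed by every generator, so the braid relation is trivial. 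Hence I may assume $H$ connected, non-spherical, with $|\Bnn(G/H)|\geq2$.

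Next I would turn the remaining problem into a finite classification: running over the four rank-$2$ types and listing, up to conjugacy, the connected non-spherical subgroups $H$ with $|\Bnn(G/H)|\geq2$. For the subgroups on this list the strategy flagged in the introduction is to produce a spherical subgroup $K\subseteq G$ and a bijection $\phi\colon\Bnn(G/H)\to\Bnn(G/K)$ intertwining both simple reflections, that is $\phi(s_\alpha\cdot Z)=s_\alpha\cdot\phi(Z)$ for all $Z$ and all $\alpha\in S$. Granting such a $\phi$, the braid relation holds on $\Bnn(G/K)$ because $K$ is spherical, and transports through the bijection to $\Bnn(G/H)$, settling that case. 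Uniform families, most notably the solvable subgroups, I would instead dispatch by a single general argument rather than by exhibiting $K$.

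The main obstacle is the construction of the intertwining bijection $\phi$ for each surviving $H$. Viewing $\Bnn(G/H)$ as a set of $H$-stable, $H$-irreducible closed subsets of $G/B$, the datum controlling $s_\alpha$ is the local type $(G)$, $(U)$, $(N)$, $(T)$ determined by the image $\Phi(H_x)\subseteq\PGL(2)$ on a generic fibre of $\pi_\alpha$, together with the ranks $r(Z)$. The crux is to choose $K$ so that, for both $\alpha$, these local images and the resulting decomposition into $P_\alpha$-blocks match those of $H$ exactly; this I expect to demand genuine case-by-case geometry---computing the relevant stabilizers, the homomorphisms $\Phi$, and the ranks---and a correct guess for $K$ in each instance. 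The hypothesis $p>2$ should be essential precisely here, both through the spherical input \cite[Theorem 4.2]{Kn95} and in ruling out the characteristic-$2$ degenerations of the type $(T)$ and type $(U)$ configurations that obstruct the construction in \cite{Kn95}.
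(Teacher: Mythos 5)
Your outline reproduces the paper's strategy faithfully: reduce to the single braid relation, dispose of the spherical case via \cite[Theorem 4.2]{Kn95} and the case $|\Bnn(G/H)|=1$ trivially, and for the remaining connected non-spherical $H$ transport the relation from a spherical model $K$ through a bijection $\Bnn(G/H)\to\Bnn(G/K)$ commuting with both simple reflections. This is exactly the plan announced in the paper's introduction. The problem is that your proposal stops where the proof actually begins: everything after ``the main obstacle is the construction of the intertwining bijection'' is stated as an expectation rather than carried out, and that construction together with the classification of the surviving $H$ is the entire content of Sections 3--6.

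Two concrete omissions. First, you leave the bijection $\phi$ as something to be guessed case by case by matching local $\PGL(2)$-types; the paper instead has a single uniform device (Lemma~\ref{lemma:normtorus}): whenever $H$ is normal in $K$ with $K/H$ diagonalizable, pullback along $G/H\to G/K$ is automatically a $\Wt$-equivariant bijection $\Bnn(G/K)\to\Bnn(G/H)$. All the reductions in the paper (solvable $H=\widetilde TP_\alpha^u$ with $K=TP_\alpha^u$, $L'\subseteq H\subseteq L$ with $K=L$, and $K=L^rH$ in Corollary~\ref{cor:other}) are instances of this one lemma; without it, ``match the local images for both $\alpha$'' is not a proof. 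Second, the classification itself is nontrivial and is where most of the work lies: Proposition~\ref{prop:solvable} (a solvable $H$ with $|\Bnn|>1$ must contain the unipotent radical of a parabolic), Lemmas~\ref{lemma:UH-fixed} and \ref{lemma:red2} for reductive $H$, the fixed-point computations of Proposition~\ref{prop:levi} showing $|\Bnn(G/L)|=1$ for the non-spherical Levi in type $\sG_2$, and Lemma~\ref{lemma:leviex} guaranteeing that a general $H$ has a Levi subgroup sitting inside a Levi of $P$. Relatedly, your guess about where $p>2$ enters is only partially right: beyond the spherical input, the decisive uses are the vanishing of $H^1(\SL(2),V)$ and $H^2(\SL(2),V)$ for the relevant modules in Lemma~\ref{lemma:leviex} (which fails for $V(\omega_1)$ when $p=2$) and the splitting argument in Lemma~\ref{lemma:sl2}(2). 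As it stands your text is a correct road map of the paper, not a proof of the theorem.
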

Sections \ref{s:solvable}--\ref{s:other} are devoted to the proof.
Precisely, the theorem follows from Corollaries~\ref{cor:solvable},
\ref{cor:red}, \ref{cor:levi} and \ref{cor:other}.

Thanks to \cite[\S7]{Kn95}, the above theorem implies the following.

\begin{corollary}
Let $\Char \Bbbk \ne2$. Then for any $G$-variety $X$ the
$\Wt$-action defined above induces a $W$-action on $\Bn(X)$.
\end{corollary}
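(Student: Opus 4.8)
The plan is to show that the $\Wt$-action descends to a genuine $W$-action by verifying the defining relations of $W$. Since $\Wt$ is presented by the generators $s_\alpha$ ($\alpha\in S$) together with the relations $s_\alpha^2=e$, and $W$ is the further quotient by the braid relations $(s_\alpha s_\beta)^{m_{\alpha\beta}}=e$ for pairs $\alpha,\beta\in S$, it suffices to check that each element $(s_\alpha s_\beta)^{m_{\alpha\beta}}$ acts as the identity on $\Bn(X)$. Such a relation involves only the two generators $s_\alpha$ and $s_\beta$, whose action is governed entirely by the minimal parabolics $P_\alpha$ and $P_\beta$; these are contained in the standard parabolic whose Levi factor $L=L_{\{\alpha,\beta\}}$ has semisimple rank $2$. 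First I would exploit this locality to reduce the verification of each braid relation to a question about an $L$-variety, and hence, after passing to the derived group of $L$, which is semisimple of rank $2$, to the case in which $G$ itself is semisimple of rank $2$.

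Next I would reduce a general $G$-variety $X$ to a homogeneous space $G/H$. Recall that $\Bn(X)$ consists of the closures of the $B$-sheets of $X$ with a maximal number of parameters (see \cite[Proposition 4.1]{Kn95}). Passing to a generic such sheet and its generic isotropy group $H$, the relevant portion of $\Bn(X)$ together with its $\Wt$-action is identified with $\Bn(G/H)$, as carried out in \cite[\S7]{Kn95}. I would then replace $H$ by its identity component $H^\circ$: the finite covering $G/H^\circ\to G/H$ induces a correspondence between $\B(G/H^\circ)$ and $\B(G/H)$ compatible with the simple reflections, so that the braid relations for $G/H$ follow from those for $G/H^\circ$. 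This brings us to the situation of Theorem~\ref{thm:waction}, with $H$ connected and $G$ semisimple of rank $2$.

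It remains to pass from $\Bnn$ to all of $\Bn$. Inspecting the table defining the $s_\alpha$-action, one sees that on each block $\Bn(Y,P_\alpha)$ the reflection $s_\alpha$ either fixes an element or interchanges two elements of equal rank; hence the $\Wt$-action preserves the rank $r(Z)$. Consequently $\Bn(X)$ decomposes into $\Wt$-stable strata indexed by rank, and, following \cite[\S7]{Kn95}, each stratum can be analyzed by reducing to the set $\Bnn$ of an auxiliary variety in which that rank is maximal. It therefore suffices to establish the braid relations on $\Bnn(G/H)$. If $H$ is spherical this is \cite[Theorem 4.2]{Kn95}, valid whenever $\Char\Bbbk\neq2$; and if $\Char\Bbbk=0$ the entire statement is already the result of \cite{Kn95}.

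The genuinely new input, and the main obstacle, is precisely the remaining case: $H$ non-spherical and $\Char\Bbbk>2$, where the characteristic-zero arguments of \cite{Kn95} break down. This is exactly the content of Theorem~\ref{thm:waction}, whose proof occupies the bulk of the present paper; the excluded characteristic $2$ cannot be recovered, since the construction itself fails there, as the example in \cite{Kn95} shows. Granting the reductions above, which are formal consequences of the machinery of \cite[\S7]{Kn95}, the corollary follows by assembling the spherical, the characteristic-zero, and the non-spherical rank-$2$ cases.
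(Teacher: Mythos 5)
Your proposal is correct and follows essentially the same route as the paper, which proves the corollary simply by invoking the reduction machinery of \cite[\S7]{Kn95} (locality of the braid relations to rank-$2$ Levi subgroups, passage to homogeneous spaces with connected isotropy, and the rank stratification reducing $\Bn$ to $\Bnn$) together with Theorem~\ref{thm:waction}. You have merely unpacked the content of that citation explicitly, including the correct handling of the spherical and characteristic-zero cases via \cite{Kn95}.
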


Before going into the details of the proof of Theorem~\ref{thm:waction},
we report two general results on $\Bnn(G/H)$.

\begin{proposition}\label{prop:transitive}
  Let $X=G/H$ be a homogeneous $G$-variety and $Z\in\Bnn(X)$ with
  $Z\ne X$. Then there is $\alpha\in S$ such that $\dim
  s_\alpha\cdot(Z)=\dim Z+1$. In particular:
\begin{itemize}
\item The $\Wt$-action on $\Bnn(X)$ is transitive.
\item If $|\Bnn(X)|>1$ then there is $Z\in\Bnn(X)$ with $\codim_XZ=1$.
\end{itemize}
\end{proposition}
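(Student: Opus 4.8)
The plan is to reduce the dimension statement to the classification table and to the single fact that $r(X)$ is the maximal rank occurring in $\Bn(X)$. I would first record, from the geometry of $\pi_\alpha\colon G/B\to G/P_\alpha$, that inside any block $\Bn(Y,P_\alpha)$ the $P_\alpha$-stable element $Y$ has dimension exactly one greater than every other member: if $Z\in\Bn(Y,P_\alpha)$ is not $P_\alpha$-stable then $\pi_\alpha|_Z$ is generically finite onto its image, whence $\dim Y=\dim P_\alpha Z=\dim Z+1$. Comparing this with the $s_\alpha$-action in the table shows that $\dim s_\alpha\cdot(Z)>\dim Z$ is possible only in case $(U)$, and that there $s_\alpha\cdot(Z)=P_\alpha Z$ has dimension $\dim Z+1$. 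Thus the first assertion is equivalent to finding a simple root $\alpha$ for which $Z$ is not $P_\alpha$-stable and the block $\Bn(P_\alpha Z,P_\alpha)$ is of type $(U)$.

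Next I would supply the two inputs. Since $Z\ne X$ and $X=G/H$ is homogeneous, $Z$ cannot be $P_\alpha$-stable for every $\alpha\in S$: the $P_\alpha$ generate $G$, so a subvariety stable under all of them would be $G$-stable, hence equal to $X$. Fix such an $\alpha$ with $Z\subsetneq P_\alpha Z=:Y$. Because $P_\alpha$ maps $\Bn(X)$ to itself, $Y\in\Bn(X)$, and it remains to exclude types $(N)$ and $(T)$, i.e.\ to rule out $r(Y)>r(Z)$. Here is where I would use the maximality of $r(X)$: reading the rank column of the table, one always has $r(P_\alpha Z')\ge r(Z')$ (equality when $Z'$ is $P_\alpha$-stable or its block is of type $(U)$, strict in types $(N),(T)$). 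Iterating $Z'\mapsto P_\alpha Z'$ along suitable simple roots climbs from any element of $\Bn(X)$ to a $G$-stable subvariety, hence to $X$, so $r(Z')\le r(X)$ for all $Z'\in\Bn(X)$. As $Z\in\Bnn(X)$ has $r(Z)=r(X)$ maximal and $r(Y)\ge r(Z)$, we get $r(Y)=r(Z)$; the block has at least two elements and so is forced to be of type $(U)$. This yields $\dim s_\alpha\cdot(Z)=\dim Z+1$.

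The main obstacle is precisely this rank bookkeeping: one must be sure that the only dimension-raising move is $(U)$ and that a maximal-rank element can never sit below a strictly-larger-rank $Y$, for which the monotonicity $r(Z')\le r(P_\alpha Z')$ together with the existence of a climbing chain to $X$ are essential.

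For the two consequences I would argue as follows. Iterating the first assertion produces, from any $Z\in\Bnn(X)$, a chain $Z=Z_0,Z_1,\dots,Z_m$ with $Z_{i+1}=s_{\alpha_i}\cdot(Z_i)$ and $\dim Z_{i+1}=\dim Z_i+1$; since the dimensions strictly increase, are bounded by $\dim X$, and $X$ is the unique element of top dimension, the chain terminates at $Z_m=X$. As each $s_{\alpha_i}$ is an involution, this exhibits $Z\in\Wt\cdot X$, so $\Bnn(X)=\Wt\cdot X$ is a single orbit, proving transitivity. Finally, if $|\Bnn(X)|>1$ I would pick $Z\ne X$ and such a climbing chain to $X$; its penultimate term $Z_{m-1}$ lies in $\Bnn(X)$ with $\dim Z_{m-1}=\dim X-1$, that is $\codim_X Z_{m-1}=1$, as required.
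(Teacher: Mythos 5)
Your proof is correct and takes essentially the same route as the paper's: both use homogeneity to find a simple root $\alpha$ with $P_\alpha Z\ne Z$ and then invoke the maximality of $r(Z)=r(X)$ to exclude types $(N)$ and $(T)$, forcing type $(U)$ and hence a dimension jump of exactly one. You merely make explicit two facts the paper leaves implicit (that only the $(U)$-case raises dimension, and that $r(X)$ is maximal in $\Bn(X)$ via the climbing-chain argument), and the two corollaries are derived identically.
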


\begin{proof}
  Suppose $\dim s_\alpha\cdot(Z)\le\dim Z$. Then the definition of the
  $s_\alpha$-action and the fact that $Z$ is of maximal rank implies
  $P_\alpha Z=Z$. Since $Z\ne X$ and since $X$ is homogeneous there
  is $\alpha\in S$ with $P_\alpha Z\ne Z$. Then $\dim
  s_\alpha\cdot(Z)=\dim Z+1$ since $s_\alpha$ increases dimension by
  at most $1$.

  This implies that the action is transitive since any $Z$ can be
  moved in finitely many steps to $(X)$. Finally, starting from any
  $Z\ne X$ the next to the last step will be of codimension one.
\end{proof}

\begin{lemma}\label{lemma:normtorus}
Let $H\subseteq K\subseteq G$ be subgroups such that $H$ is normal in
$K$ and the quotient $K/H$ is diagonalizable. Let $\pi\colon G/H\to
G/K$ be the standard projection. Then the map $Y\mapsto \pi^{-1}(Y)$ is
a $\Wt$-equivariant bijection $\Bnn(G/K)\to\Bnn(G/H)$.
\end{lemma}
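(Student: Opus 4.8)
The plan is to exploit the right action of the group $A:=K/H$ on $G/H$. Since $H$ is normal in $K$, the rule $gH\cdot(kH)=gkH$ defines an action of $A$ on $G/H$ by $G$-automorphisms, i.e.\ commuting with the left $G$-action; the fibres of $\pi$ are precisely the $A$-orbits, so $\pi$ is the quotient by $A$. Because $A$ commutes with the left action of $B$ and of every $P_\alpha$, it permutes $\B(G/H)$ preserving irreducibility, closedness, complexity and rank, hence preserves $\Bnn(G/H)$; moreover it commutes with the entire $\Wt$-action, since that action is defined purely through the $P_\alpha$-operation $Z\mapsto P_\alpha Z$ and the ranks occurring in each block. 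As $A$ is diagonalizable its identity component $A^0$ is a torus, and a connected group acting on the finite set $\Bnn(G/H)$ must act trivially.

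The key point is to upgrade this to triviality of the full action of $A$, and this is the step I expect to be the main obstacle: a priori the finite quotient $A/A^0$ could act nontrivially and thereby disconnect the preimages $\pi^{-1}(Y)$. To rule this out I would invoke Proposition~\ref{prop:transitive}: the $\Wt$-action on $\Bnn(G/H)$ is transitive. The element $(G/H)\in\Bnn(G/H)$ is fixed by $A$, being the whole space, and $A$ commutes with this transitive $\Wt$-action. A commuting action that fixes one point of a transitive action fixes every point, for if $Z=w\cdot(G/H)$ then $a\cdot Z=a\cdot w\cdot(G/H)=w\cdot a\cdot(G/H)=w\cdot(G/H)=Z$. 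Hence $A$ acts trivially on all of $\Bnn(G/H)$, and it is exactly the combination of $\Wt$-transitivity with the commutation of $A$ and $\Wt$ that disposes of the troublesome finite part.

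Granting triviality of $A$, every $Z\in\Bnn(G/H)$ is $A$-stable, hence saturated for $\pi$, so $Z=\pi^{-1}(\pi(Z))$ and $\pi(Z)$ is a closed irreducible $B$-stable subvariety of $G/K$. Conversely, for $Y\in\Bnn(G/K)$ the irreducible components of $\pi^{-1}(Y)$ are $B$-stable (as $B$ is connected), dominate $Y$, and are permuted transitively by $A$; since $A$ now fixes each of them, $\pi^{-1}(Y)$ is irreducible. Thus $Y\mapsto\pi^{-1}(Y)$ and $Z\mapsto\pi(Z)$ are mutually inverse bijections. That they respect the $\Bnn$-conditions rests on the standard behaviour of the invariants under a diagonalizable quotient: along the principal $A$-bundle $\pi$ the complexity is unchanged while the rank increases by $\dim A^0$, so maximality of both $c$ and $r$ is preserved in either direction.

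Finally I would verify $\Wt$-equivariance. As $\pi$ is $G$-equivariant, $\pi(P_\alpha Z)=P_\alpha\pi(Z)$ for every subvariety $Z$, so on $\Bnn(G/H)$ the bijection respects the relation $P_\alpha Z=P_\alpha Z'$, in both directions. By the table this relation partitions $\Bnn$ into classes of one or two elements, a two-element class necessarily being a type~$(U)$ pair interchanged by $s_\alpha$ while a one-element class (arising from types $(G),(N),(T)$) is fixed; thus $s_\alpha$ is encoded entirely by this relation. A size-preserving bijection of the classes therefore commutes with every $s_\alpha$, which gives $\Wt$-equivariance and finishes the proof.
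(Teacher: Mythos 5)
Your overall architecture is attractive, and several of its ingredients are sound: the right action of $A=K/H$ on $G/H$ by $G$-automorphisms does preserve $\Bnn(G/H)$ and does commute with the $\Wt$-action, and your observation that a commuting action fixing the point $(G/H)$ of the transitive $\Wt$-set $\Bnn(G/H)$ must be trivial is correct and rather elegant (it is not in the paper, whose proof is a downward induction along the $\Wt$-action instead). The problem is the step you delegate to ``standard behaviour of the invariants'': the claim that along $\pi$ the complexity is unchanged while the rank increases by $\dim A^0$ is false in general. Take $G=\SL(2)$, $H=\{e\}$, $K=T$, $Y=G/K$: then $c(G/T)=0$ but $c(\SL(2))=1$, while $r(G/T)=r(\SL(2))=1$, so the complexity jumps and the rank does not move. (The lemma is vacuously fine there since both $\Bnn$'s are singletons, but your asserted transformation law, which is what you use to transport the maximality of $c$ and $r$ in both directions, is simply wrong.) A second, linked gap is the irreducibility of $\pi^{-1}(Y)$: you deduce that $A$ fixes each irreducible component because ``$A$ now fixes each of them'', but what you proved is that $A$ acts trivially on $\Bnn(G/H)$, and the components of $\pi^{-1}(Y)$ are a priori only elements of $\B(G/H)$. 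The distinction matters: for $H=T\trianglelefteq K=N_G(T)$ in $\SL(2)$ the group $A=\mathbb Z/2$ genuinely swaps the two closed $B$-stable curves in $\SL(2)/T$, which lie in $\B$ but not in $\Bnn$. So triviality on $\Bnn$ cannot be invoked until you already know the components lie in $\Bnn(G/H)$ --- which is the heart of the lemma.

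The paper closes exactly this hole by induction on $\codim_{G/K}Z$, using Proposition~\ref{prop:transitive} to write each $Z\in\Bnn(G/K)$ as $s_\alpha\cdot(Y)$ with $\dim Y=\dim Z+1$ and $\pi^{-1}(Y)\in\Bnn(G/H)$ already known. The point is then local: at a general point the stabilizer $P_\alpha\cap{}^gH$ is normal in $P_\alpha\cap{}^gK$ with diagonalizable quotient, so if the image of the latter in $\PGL(2)$ is proper and contains a maximal unipotent subgroup (type $(U)$), the same holds for the former; hence the move downstairs is again of type $(U)$, which manufactures a \emph{single} new element of $\Bnn(G/H)$, automatically irreducible and of the same complexity and rank as $\pi^{-1}(Y)$. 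This simultaneously yields irreducibility of $\pi^{-1}(Z)$, its membership in $\Bnn(G/H)$, and the equivariance on type-$(U)$ pairs, with a separate stabilizer comparison for the fixed points. If you want to keep your quotient-by-$A$ framework, you would still need to import this inductive mechanism (or some genuine substitute) to show that $\pi^{-1}$ and $\pi$ exchange the two $\Bnn$'s; your closing paragraph on $\Wt$-equivariance via the relation $P_\alpha Z=P_\alpha Z'$ is fine once that is in place.
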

\begin{proof}
We show by induction on $\codim_{G/K}Z$ that $\pi^{-1}(Z) \in
\Bnn(G/H)$ for all $Z\in \Bnn(G/K)$. This is true if $Z=G/K$, hence we can suppose
$Z\subsetneq G/K$.

Let $Y\in \Bnn(G/K)$ and $\alpha\in S$ be such that $\dim Y=\dim Z+1$
and $s_\alpha\cdot (Z)=(Y)$; notice that such elements exist thanks to
Proposition~\ref{prop:transitive}. In particular, $Z$ is the unique
element of $\Bnn(G/K)$ such that $P_\alpha Z = Y$. 

By induction $Y'=\pi^{-1}(Y)$ is in $\Bnn(G/H)$. Then for a general $y
\in Y$ the image of the stabilizer $(P_\alpha)_y$ into
$\Aut(P_\alpha/B) = \Aut(\PP^1) = \PGL(2)$ is a proper subgroup that
contains a maximal unipotent subgroup.

Let $y' = gH$ be a general point of $Y'$: its stabilizer
$(P_\alpha)_{y'}$ is equal to $P_\alpha\cap {}^gH$, and we may compare
the latter with $(P_\alpha)_y = P_\alpha\cap {}^gK$ where $y = gK =
\pi(y')$. We have that $P_\alpha\cap{}^gH$ is normal in
$P_\alpha\cap{}^gK$, and the quotient is a diagonalizable group.
Therefore the image of $(P_\alpha)_{y'}$ into $\PGL(2)$ is again proper
and contains a maximal unipotent subgroup. Then the $P_\alpha$-orbit of
$y'$ in this case is the union of two $B$-orbits, and there exists a
unique $Z'\in\B(G/H)$ such that $P_\alpha Z' = Y'$. It follows that
$P_\alpha \pi^{-1}(Z) = Y'$, so $Z'=\pi^{-1}(Z)$, the latter lies in
$\Bnn(G/H)$, and $s_\alpha\cdot(Y')=(Z')$.

Denote with $f\colon \Bnn(G/K)\to \Bnn(G/H)$ the map $Y\mapsto \pi^{-1}(Y)$.
We have shown that $f$ is injective, and that whenever $s_\alpha$ exchanges
two elements $Y,Z\in \Bnn(G/K)$ then it exchanges $f(Y)$ and
$f(Z)$.

To show that $f$ is $\Wt$-equivariant, which also implies the surjectivity
of $f$ since both sets are $\Wt$-orbits, it remains only to consider
$Y\in\Bnn(G/K)$ and $\alpha\in S$ such that $Y$ is $P_\alpha$-stable
and $s_\alpha\cdot(Y)=(Y)$, and to check that $s_\alpha\cdot(f(Y))=(f(Y))$.
In this case, the image of $P_\alpha\cap {}^gK$
in $\PGL(2)$ is either the whole $\PGL(2)$, a maximal torus, the
normalizer of a maximal torus, or a finite group. Since
$P_\alpha\cap{}^gH$ is a normal subgroup of $P_\alpha\cap {}^gK$ such
that the quotient is diagonalizable, we conclude that its image in
$\PGL(2)$ also belongs to one of the above four possible types of
subgroups. Hence $s_\alpha\cdot(f(Y))=(f(Y))$.
\end{proof}

\section{Solvable subgroups} \label{s:solvable}

\emph{We assume from now on that $p:=\Char\Bbbk>2$.}

\begin{lemma}\label{lemma:U}
  Suppose that $G$ is semisimple of rank $2$, and let $\alpha, \beta$
    be the two simple roots. Let $N$ be the normal subgroup of $U$
    generated by $U_\alpha$. Then $N=P_\beta^u$.
\end{lemma}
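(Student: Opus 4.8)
The plan is to prove the two inclusions $N\subseteq P_\beta^u$ and $P_\beta^u\subseteq N$ separately, exploiting that in rank $2$ the unipotent radical $P_\beta^u$ coincides with the product $\prod_{\gamma\in R^+\setminus\{\beta\}}U_\gamma$ of all positive root subgroups except $U_\beta$ (in a reduced rank-$2$ system $\beta$ is the only positive root proportional to $\beta$). For the first inclusion I would simply note that $P_\beta^u$ is the unipotent radical of the parabolic $P_\beta\supseteq U$, hence a normal subgroup of $U$, and that it contains $U_\alpha$ because $\alpha\neq\beta$. Since $N$ is by definition the smallest normal subgroup of $U$ containing $U_\alpha$, this gives $N\subseteq P_\beta^u$ at once.

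The reverse inclusion carries the content, and I would obtain it without computing individual structure constants. The two structural facts I would use are: (i) $U$ is generated, as an algebraic group, by its simple root subgroups $U_\alpha$ and $U_\beta$; and (ii) the derived group $U'$ equals the product $\prod_{\gamma\in R^+\setminus S}U_\gamma$ of the non-simple positive root subgroups. Granting (i), the quotient $U/N$ is generated by the images of $U_\alpha$ and $U_\beta$; but $U_\alpha\subseteq N$, so $U/N$ is generated by the image of the single one-parameter subgroup $U_\beta$ and is therefore abelian. Hence $U'\subseteq N$. Combining this with $U_\alpha\subseteq N$ and the rank-$2$ identity $P_\beta^u=U_\alpha\cdot U'$ (valid because $R^+\setminus\{\beta\}=\{\alpha\}\cup(R^+\setminus S)$ and $U'$ is normal in $U$) yields $P_\beta^u\subseteq N$, and together with the first inclusion this finishes the proof.

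The step I expect to require genuine care is fact (ii) in small characteristic. For $G$ of type $G_2$ and $p=3$ the root-subgroup description of $U'$ is not visible on the Lie algebra, since the relevant bracket $[\mathfrak g_\alpha,\mathfrak g_{2\alpha+\beta}]$ has structure constant divisible by $3$; so the naive attempt to reach the root subgroup of height $4$ through the commutator $[U_\alpha,U_{2\alpha+\beta}]$ degenerates. I would handle this at the group level: the commutator relation between the two \emph{simple} root subgroups $U_\alpha$ and $U_\beta$ has leading coefficients equal to $\pm1$ (and, for the highest root, $\pm2$), so peeling off the lower- and higher-height factors in turn exhibits every non-simple $U_\gamma$ inside $U'$, and (ii) holds in every characteristic. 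Alternatively one may just invoke the standard, characteristic-free description of the derived group of the unipotent radical via the non-simple roots, which makes the argument uniform across all types. I would emphasize that no hypothesis beyond the running assumption is needed at any point: the argument never uses $p\neq2$, and in particular covers the delicate $G_2$, $p=3$ case.
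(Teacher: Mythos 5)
Your skeleton is sound, and it is a genuinely different packaging from the paper's: you deduce $U'\subseteq N$ from the generation fact (i) $U=\langle U_\alpha,U_\beta\rangle$ (correctly: the image of $U_\beta$ in $U/N$ is a closed subgroup, so $U/N$ equals that image and is abelian), and then conclude via (ii) $U'=\prod_{\gamma\in R^+\setminus S}U_\gamma$ together with $P_\beta^u=U_\alpha\cdot U'$. The paper is more direct: since $N$ is the normal closure in $U$ of the $T$-stable subgroup $U_\alpha$, it is itself normalized by $T$, hence is a directly spanned product $\prod_{\gamma\in R'}U_\gamma$ of root subgroups; then the single conjugation formula $u_\beta(y)^{-1}u_\alpha(x)u_\beta(y)=\prod_{\gamma\in R^+\setminus\{\beta\}}u_\gamma(f_\gamma(x,y))\in N$, with every $f_\gamma$ non-constant (read off from the SGA3 tables for each rank-$2$ type, in every characteristic), forces $R^+\setminus\{\beta\}\subseteq R'$ by uniqueness of the root-subgroup factorization of elements of $U$.

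The weak point of your write-up is exactly the step you flag and then wave through. Neither (i) nor (ii) is a citable ``standard, characteristic-free'' fact: statements of this type are precisely what the standard literature (e.g.\ Azad--Barry--Seitz on unipotent radicals of parabolics) treats with exceptions in the special characteristics, and $\mathsf{G}_2$ with $p=3$ --- in scope here, since the paper only assumes $p>2$ --- is the delicate case. So both facts must be proved, and your ``peeling'' sketch hides the real difficulty: the factors $u_\gamma(f_\gamma(x,y))$ do \emph{not} commute with one another (e.g.\ $[U_{\alpha+\beta},U_{2\alpha+\beta}]\subseteq U_{3\alpha+2\beta}$ with coefficient $\pm3$, which itself degenerates when $p=3$), so you cannot extract the individual $U_\gamma$ from the commutator one height at a time; you need either fiddly products of commutators with varying parameters, with different degenerations for $p=2$ and $p=3$, or the paper's device: any closed connected $T$-stable subgroup of $U$ (such as $N$, $U'$, or $\langle U_\alpha,U_\beta\rangle$) is a product of root subgroups, after which non-constancy of each $f_\gamma$ finishes the argument. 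Once you use that device, you may as well apply it to $N$ directly, and your detour through $U'$ and fact (i) collapses into the paper's proof. Two small corrections: in the $\mathsf{G}_2$ commutator of the two \emph{simple} root subgroups all four coefficients are $\pm1$ (the $3\alpha+2\beta$ coefficient included), not $\pm2$ for the highest root --- this unit property is exactly what makes the non-constancy claim uniform in $p$; and in your first inclusion you should note $P_\beta^u\subseteq U$, so that it really is a normal subgroup \emph{of} $U$ containing $U_\alpha$.
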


\begin{proof}
  Clearly, $N\subseteq P_\beta^u$. To show the reverse inclusion we
  recall that 
  \begin{equation}\label{Equation:commutator}
  u_\beta(y)^{-1}u_\alpha(x)u_\beta(y)=
  \prod_{\gamma\in R^+\setminus\{\beta\}}u_\gamma(f_\gamma(x,y))\in N
  \end{equation}
where each $f_\gamma(x,y)$ is a non-constant polynomial (see
\cite[Expos\'e XXIII: Proposition 3.1.2(iii), Proposition 3.2.1(iii), Proposition
3.3.1(iii), Proposition 3.4.1(iii)]{SGA3} for groups of type \
$\mathsf A_1\times \mathsf A_1$, $\mathsf A_2$, $\mathsf B_2$ and
$\mathsf G_2$, respectively). Since $N$ is normalized by $T$ we have
$N=\prod_{\gamma\in R'}U_\gamma$ where $R'\subseteq R^+$ is a
subset. Thus, all factors of the right hand side of
(\ref{Equation:commutator}) are also in $N$. This shows
$R^+\setminus\{\beta\}\subseteq R'$ and therefore
$P_u^\beta\subseteq N$ , as claimed.
\end{proof}

\begin{proposition}\label{prop:solvable}
  Let $G$ be semisimple of rank $2$ and $H\subseteq G$ be a connected
  solvable subgroup. Then $|\Bnn(G/H)|=1$, or $H$ contains the
  unipotent radical of a parabolic subgroup of $G$.
\end{proposition}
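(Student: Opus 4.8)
The plan is to reduce to the case that $H$ is unipotent, and then to translate the existence of a second element of $\Bnn(G/H)$ into a transversality statement about the generic isotropy of $H$ on the $\PP^1$-bundle $\pi_\alpha$, which the commutator relations should then force to be ``horospherical''.

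First I would normalise $H$. As a connected solvable group, $H$ lies in a Borel subgroup, so after conjugation $H\subseteq B$ with maximal torus $T_H\subseteq T$ and unipotent radical $H^u\subseteq U$. Applying Lemma~\ref{lemma:normtorus} to the pair $H^u\lhd H$—whose quotient $H/H^u\cong T_H$ is a torus, hence diagonalisable—yields a $\Wt$-equivariant bijection $\Bnn(G/H)\to\Bnn(G/H^u)$. Since every unipotent subgroup of the solvable group $H$ is contained in $H^u$, the conclusion ``$H\supseteq P^u$'' is equivalent to ``$H^u\supseteq P^u$''. Thus both the hypothesis $|\Bnn(G/H)|>1$ and the desired conclusion are unchanged if $H$ is replaced by $H^u$, and I may assume from now on that $H\subseteq U$ is unipotent.

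Next I would produce the relevant local picture. If $|\Bnn(G/H)|>1$, then by Proposition~\ref{prop:transitive} there are $Z\in\Bnn(G/H)$ of codimension $1$ and a simple root $\alpha$ with $\dim s_\alpha\cdot(Z)=\dim Z+1=\dim X$, hence $s_\alpha\cdot(Z)=(X)$. Consulting the table—and noting that $H$ unipotent forces $\Phi(H_x)$ unipotent, so that cases $(N)$ and $(T)$ cannot occur—we are in case $(U)$: for general $x=gP_\alpha$ the image $\Phi(H\cap{}^gP_\alpha)$ is a maximal unipotent subgroup $\GG_a$ of $\PGL(2)$. Since $\ker\Phi\subseteq{}^gB$, this is equivalent to the purely group-theoretic statement
\[
\dim\bigl(H\cap{}^gP_\alpha\bigr)=\dim\bigl(H\cap{}^gB\bigr)+1\qquad\text{for general }g,
\]
i.e.\ that a generic conjugate of $H$ meets the big Bruhat cell $Bs_\alpha B$ of $P_\alpha$ (equivalently, passing to Lie algebras, $\mathfrak h\cap({}^g\mathfrak b\oplus{}^g\mathfrak g_{-\alpha})\not\subseteq{}^g\mathfrak b$ for general $g$).

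The final and hardest step is to upgrade this generic transversality condition to the honest containment $H\supseteq P^u$, and I expect this to be the main obstacle. The difficulty is that $H$ is an arbitrary connected unipotent subgroup and need not be $T$-stable, so one cannot simply read off its root-subgroup content. My plan is to extract from case $(U)$ a one-parameter unipotent subgroup of $H$ projecting nontrivially onto the $\alpha$-direction, and then to exploit that $H$ is a genuine subgroup closed under the commutator relations~\eqref{Equation:commutator}: exactly as in the proof of Lemma~\ref{lemma:U}, the non-constancy of the polynomials $f_\gamma$—which holds uniformly in all four rank-two types precisely because $p>2$—lets one solve for the individual factors $u_\gamma(f_\gamma(x,y))$ and thereby generate all root subgroups $U_\gamma$ with $\gamma\neq\beta$. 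This would give $H\supseteq P_\beta^u$, the normal closure of $U_\alpha$ in $U$, completing the proof (so that, for unipotent $H$ inside $U$, the only possibilities with $|\Bnn(G/H)|>1$ are $H=U,\,P_\alpha^u,\,P_\beta^u$). The delicate points are making the extraction of the root subgroup precise without $T$-stability, and checking that the commutator relations remain non-degenerate in the borderline characteristic $p=3$ for type $G_2$.
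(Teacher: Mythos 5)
Your setup is sound and close to the paper's: the reduction to unipotent $H$ via Lemma~\ref{lemma:normtorus} is valid (though the paper does not need it), and the use of Proposition~\ref{prop:transitive} together with the case table to conclude that the generic isotropy group of $H$ on $G/P_\alpha$ contains a $\GG_a$ is exactly the paper's first step. But the final step, which you yourself flag as the main obstacle, has a genuine gap, and the strategy you propose for it is not the one that works. You cannot run the argument of Lemma~\ref{lemma:U} ``inside $H$'': that argument decomposes the group in question as a product of root subgroups, which is only possible because the group is normalized by $T$, and (as you note) $H$ has no reason to be $T$-stable or to contain the root subgroup $U_\beta$ needed to form the commutators in the first place. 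Trying to ``solve for the individual factors $u_\gamma(f_\gamma(x,y))$'' therefore does not get off the ground.

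The missing idea is much more direct and does not require extracting any root subgroup from $H$ by hand. Put $H\subseteq B^-$ and look at a general point $x=uP_\alpha$ with $u\in U^-$, lying in the open $U^-$-orbit of $G/P_\alpha$. The \emph{full} isotropy group of $x$ in $B^-$ is $B^-_x={}^u(TU_{-\alpha})$, a group whose unique connected unipotent subgroup of positive dimension is ${}^uU_{-\alpha}$ (any unipotent subgroup of $T\ltimes U_{-\alpha}$ projects trivially to $T$). Hence the $\GG_a$ you found inside $H_x\subseteq B^-_x$ \emph{is} ${}^uU_{-\alpha}$, and so ${}^uU_{-\alpha}\subseteq H$ for general, hence (by closedness of this condition) for all $u\in U^-$. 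This hands you the entire normal closure of $U_{-\alpha}$ in $U^-$ at once; that closure \emph{is} $T$-stable, so Lemma~\ref{lemma:U} applies to it and identifies it as $(P^-_\beta)^u$, giving the required unipotent radical inside $H$. In short: the transversality condition should be pushed through the smallness of the ambient stabilizer $B^-_x$, not through commutator relations within $H$.
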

\begin{proof}
  Suppose $|\Bnn(G/H)|> 1$. Then there is a simple root
  $\alpha$ and $Z\in\Bnn(G/H)$ with $Z\ne G/H$ and $s_\alpha(Z)=(G/H)$
  (Proposition \ref{prop:transitive}). Let $Z'\subset G/B$ correspond to
  $Z$. The definition of the
  $s_\alpha$ -action implies that $\pi_\alpha(Z')=G/P_\alpha$ and that
  the generic isotropy group of $H$ in $Z'$, hence in $G/P_\alpha$,
  contains a subgroup isomorphic to $\GG_a$. Without loss of
  generality we may assume that $H\subseteq B^-$. The $U^-$-orbit of
  $e_GP_\alpha$ is dense in $G/P_\alpha$ and, for $u\in U^-$, the
  isotropy group of $x=uP_\alpha$ in $B^-$ is
  $B^-_x={}^u(TU_{-\alpha})={}^uT\,{}^uU_{-\alpha}$. Because of
  $H_x\subseteq B^-_x$ we conclude that ${}^uU_{-\alpha}\subseteq H$
  for $u\in U^-$ general and therefore, by continuity, for all $u\in
  U^-$. Then Lemma \ref{lemma:U} implies $H\supseteq P_{-\beta}^u$
  where $\beta$ is the simple root different from $\alpha$.
\end{proof}

\begin{corollary} \label{cor:solvable} Theorem \ref{thm:waction} holds
  for all connected solvable subgroups $H\subset G$.
\end{corollary}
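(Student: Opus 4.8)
The plan is to run the dichotomy of Proposition~\ref{prop:solvable}. If $|\Bnn(G/H)|=1$ then $\Wt$ acts on a one-element set, so the kernel of $\Wt\to W$ acts trivially and a $W$-action is induced for free. Otherwise $H$ contains the unipotent radical $P^u=R_u(P)$ of a proper parabolic $P$, and I would reduce to the spherical case, where \cite[Theorem 4.2]{Kn95} already provides the descent of the $\Wt$-action to $W$ (this is the only place the hypothesis $\Char\Bbbk\ne2$ enters). The reduction is engineered to feed Lemma~\ref{lemma:normtorus}: I look for a subgroup $K$ with $H\subseteq K$, $H$ normal in $K$, $K/H$ diagonalizable, and $K$ \emph{spherical}; then Lemma~\ref{lemma:normtorus} gives a $\Wt$-equivariant bijection $\Bnn(G/K)\to\Bnn(G/H)$ that transports the $W$-action from the spherical side to $G/H$.

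To construct $K$, I first normalise the position of $H$. Conjugating, I may assume $B\subseteq P$ and, since $P=N_G(P^u)$ and any Borel containing $P^u$ lies in $P$, that $H\subseteq P$ (solvability is essential here: combined with $P^u\subseteq H$ it forbids $H$ from containing an opposite root subgroup that would otherwise generate a copy of $\SL_2$ or $\PGL_2$, forcing $H$ into $P$). Writing $P=L\ltimes P^u$ with $T\subseteq L$, the image $\bar H=H/P^u$ is a connected solvable subgroup of the Levi $L$, and $L$ has semisimple rank $\le1$ because $G$ has rank $2$ and $P$ is proper. After conjugating in $L$ I may take the maximal torus of $\bar H$ inside $T$, so that $H$ becomes $T$-stable of the shape $T_0\cdot P^u$ (with $T_0\subseteq T$ a subtorus) or $T_0\cdot U$, according to whether $\bar H$ avoids or contains the maximal unipotent subgroup of $L$. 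In either case I set $K:=T\cdot H$, which equals $T\,P^u$ or the Borel $B$; then $H$ is normal in $K$ with $K/H\cong T/T_0$ diagonalizable.

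It remains to check that $K$ is spherical. For $K=B$ this is standard. In general $K=T\,P^u$ is the parabolic induction of the maximal torus $T\subseteq L$, which is spherical in $L$ because $L$ has semisimple rank $\le1$; and parabolic induction preserves sphericity, so $K$ is spherical in $G$. \emph{This sphericity of $T\,P_\gamma^u$ is the main obstacle}: unlike for a Borel, the dense $B$-orbit on $G/K$ is realised neither by the base point nor by its $w_0$-translate but by a generic point, so the density has to be extracted from the generic orbit of the induced action rather than read off directly. Granting it, Lemma~\ref{lemma:normtorus} yields $\Bnn(G/K)\cong\Bnn(G/H)$ $\Wt$-equivariantly, and combining this with the spherical case of \cite[Theorem 4.2]{Kn95} shows that the braid relations of $W$ act trivially on $\Bnn(G/H)$; that is exactly the assertion that the $\Wt$-action induces a $W$-action, which proves Corollary~\ref{cor:solvable}.
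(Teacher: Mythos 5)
Your proof is correct, and it rests on the same three pillars as the paper's own argument: the dichotomy of Proposition~\ref{prop:solvable}, the transfer Lemma~\ref{lemma:normtorus} to a spherical group $K$, and the spherical case of \cite[Theorem 4.2]{Kn95}. The one genuine difference is organizational. The paper first discards every spherical $H$ outright (implicitly using that $\widetilde TU$ is always spherical and that $\widetilde TP_\alpha^u$ is spherical for every subtorus $\widetilde T\subseteq T$ other than $\{e\}$ and $(\ker\alpha)^\circ$), and then applies Lemma~\ref{lemma:normtorus} only to the two residual groups, always with $K=TP_\alpha^u$. You instead normalize $H$ to the form $T_0P^u$ or $T_0U$ with $T_0\subseteq T$ a subtorus --- this reduction is sound: $H$ lies in a Borel subgroup containing $P^u$, every such Borel lies in $P=N_G(P^u)$, and the Levi of $P$ has semisimple rank at most one --- and then apply Lemma~\ref{lemma:normtorus} uniformly with $K=TH\in\{TP^u,B\}$, noting $H\trianglelefteq TH$ with quotient the torus $T/T_0$. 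What this buys is that you never have to decide which of the groups $H$ are themselves spherical; the price is some conjugation bookkeeping, which you carry out correctly. Two small remarks. First, the sphericity of $TP^u$, which you flag as ``the main obstacle'' and then merely ``grant'', is in fact fully established by the argument you sketch: $T$ is spherical in the semisimple-rank-one Levi $L$, and since $B^-P$ is open in $G$ with $B^-\cap P$ a Borel subgroup of $L$, density of the generic $(B^-\cap L)$-orbit in $P/K\cong L/T$ gives density of a $B^-$-orbit in $G/K$; note that the paper asserts this sphericity with no more justification than you give. Second, your aside that \cite[Theorem 4.2]{Kn95} is the only place $\Char\Bbbk\neq2$ enters is slightly off, since Proposition~\ref{prop:solvable} itself sits under the standing assumption $p>2$ of Section~\ref{s:solvable} (via Lemma~\ref{lemma:U}); but as you use that proposition as a black box, this does not affect the correctness of your argument.
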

\begin{proof}
  If $|\Bnn(G/H)|=1$ there is nothing to prove. Otherwise, without
  loss of generality, the unipotent radical $H^u$ is either $U$ or
  $P_\alpha^u$ for some simple root $\alpha$ (Proposition
  \ref{prop:solvable}). The claim is also known to be true if $H$ is
  spherical. This leaves to check only
  $H=\widetilde T P_\alpha^u$ where either $\widetilde T=\{e\}$ or
    $\widetilde T = (\ker\alpha)^\circ\subset T$. In either case
    Lemma~\ref{lemma:normtorus} applies to $H$ and $K=TP_\alpha^{u}$;
    since $TP_\alpha^{u}$ is spherical, the corollary follows.
\end{proof}

\section{Reductive subgroups}  \label{s:reductive}

In this section, we consider the case where $H$ is a reductive
subgroups of $G$. Clearly, by the preceding section we may assume that
$H$ is not a torus. So, its semisimple rank is one or two.

\begin{proposition}
Let $G$ be a semisimple group of rank $2$ and $H\subseteq G$ a
connected reductive subgroup of semisimple rank $2$. Then $H$ is
spherical.
\end{proposition}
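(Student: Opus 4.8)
The plan is to reduce the statement to a short explicit list of subgroups and then verify sphericity for each. First I would observe that the hypotheses force $H$ to be \emph{semisimple} of full rank $2$. Writing the reductive group $H=Z(H)^\circ\cdot H'$ with $H'$ semisimple, the semisimple--rank hypothesis gives $\rank H'=2$, so that $\rank H=\dim Z(H)^\circ+2\le\rank G=2$ forces $Z(H)^\circ=\{e\}$ and hence $H=H'$. Since now $\rank H=\rank G=2$, after conjugating $H$ I may assume that it contains the fixed maximal torus $T$. Consequently the root system $R_H$ is a closed subsystem of $R_G$ of rank $2$.

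Next I would enumerate these closed full-rank subsystems. As $H\supseteq T$, it is a subsystem subgroup, so $R_H$ arises through the Borel--de Siebenthal procedure (deletions from the extended Dynkin diagram), which is insensitive to the characteristic. Running through the four types of $G$ and consulting \cite{Bou} for the explicit roots, the only possibilities with $H\ne G$ are: for $G$ of type $A_1\times A_1$ or $A_2$ there are none, so $H=G$; for type $B_2=C_2$ there is a single subgroup of type $A_1\times A_1$; and for type $G_2$ there are exactly two, of type $A_2$ (the long-root subsystem, i.e.\ $\SL_3$) and of type $A_1\times A_1$.

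Then I would check sphericity in each case. The case $H=G$ is trivial since $G/H$ is a point. In both the $B_2$ and the $G_2$ situations the $A_1\times A_1$ subgroup is the identity component of the fixed-point group of an involution of $G$ (here $p\ne2$ is essential), so $G/H$ is a symmetric space and therefore spherical; at this point one may invoke the spherical case already recorded before Theorem~\ref{thm:waction}. This leaves only the genuinely new case $\SL_3\subseteq G_2$, which is not symmetric.

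The hard part will be precisely this last case. Here I would verify sphericity directly: for generic $g\in G$ I would compute the intersection $B\cap{}^gH$ from the root-subgroup combinatorics and show that it is $2$-dimensional (a maximal torus), so that the generic $B$-orbit on $G/H$ is dense. A convenient way to make the open $B$-orbit visible is to realize $G_2/\SL_3$ as a smooth affine quadric in the $7$-dimensional representation, with $\SL_3$ the stabilizer of an anisotropic vector. This is also the step where positive-characteristic phenomena must be handled with care, in particular the exceptional behaviour of $G_2$ when $p=3$, which one must rule out as a source of additional subgroups or of a degeneration of the generic stabilizer.
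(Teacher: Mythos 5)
Your overall strategy---reduce to full-rank reductive subgroups containing $T$, enumerate them, and check sphericity case by case---is the same as the paper's, but there is a genuine gap in the enumeration step. You assert that $R_H$ is a \emph{closed} subsystem of $R_G$ and that the list of possibilities comes from Borel--de Siebenthal, ``insensitive to the characteristic.'' This is false precisely in the case the paper has to worry about: for $G$ of type $\sG_2$ and $p=3$, the six short roots form a subsystem of type $\sA_2$ which is \emph{not} closed (the sum of two short roots can be a long root), and yet, because the relevant structure constants vanish mod $3$, the short root subgroups together with $T$ generate a genuine connected reductive subgroup of type $\sA_2$. So the weights of $T$ on $\operatorname{Lie}(H)$ need not form a closed subsystem in positive characteristic, and Borel--de Siebenthal does not give the complete list. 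You flag the ``exceptional behaviour of $G_2$ when $p=3$'' at the very end as something ``one must rule out as a source of additional subgroups,'' but it cannot be ruled out: the extra subgroup exists and must be dealt with. The paper handles it by observing that the special isogeny of $\sG_2$ in characteristic $3$ (Borel--Tits) carries the short-root $\sA_2$ onto the long-root $\sA_2$, so sphericity of one follows from sphericity of the other.

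On the verification of sphericity, your route also diverges from the paper's: the paper simply invokes Brundan's classification of spherical reductive subgroups in positive characteristic (\cite[Theorem 4.3]{Br98}), which covers all cases at once, whereas you propose a symmetric-space argument for the $\sA_1\times\sA_1$ cases (fine, since $p\neq 2$ is in force here) and a direct computation for $\SL_3\subset \sG_2$. The latter is only sketched---``compute $B\cap{}^gH$ and show it is $2$-dimensional'' is the statement to be proved, not a proof---and the proposed realization of $\sG_2/\SL_3$ as a quadric in the $7$-dimensional representation itself needs care for small $p$ (the paper only identifies $V_G(\omega_1)$ with the Weyl module for $p\geq 5$). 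So as written the proposal has two holes: the missing short-root subgroup in characteristic $3$, and the unproved sphericity of the long-root $\sA_2$; both are closed in the paper by the special isogeny plus the citation of Brundan.
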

\begin{proof}
The root system of $H$ is given by a subroot system
of rank $2$ of the root system of $G$. These are easily determined: in
case $G=\sA_1\times\sA_1$
or $G=\sA_2$ then only $H=G$ is possible. If $G=\sB_2$ then there is 
additionally $H=\sA_1\times\sA_1$. Finally, if $G=\sG_2$ there is an
extra-complication if $p=3$. In general, $H=\sA_1\times\sA_1$ and
$H=\sA_2$ is possible, the latter corresponding to the set of long
roots. If $p=3$ then there is another subgroup of type $\sA_2$
corresponding to the short roots. But that is mapped by a special
isogeny (see \cite[\S3.3]{BT73}) to the former. We conclude that $H$
is a spherical subgroup in every case thanks to
\cite[Theorem 4.3]{Br98}.
\end{proof}

We are left with the case where $H$ has semisimple rank one.

\begin{lemma}\label{lemma:UH-fixed}

Let $G$ be a semisimple group of rank $2$ and $H\subset G$
  a connected reductive subgroup with $\operatorname{rk}_{\mathrm{ss}}H=1$
  and $|\Bnn(G/H)|>1$. Let $U_H\cong \mathbb G_a$ be a maximal
  connected
  unipotent subgroup of $H$. Then there is a simple root $\alpha\in S$ such
  that
  the fixed point set $(G/P_\alpha)^{U_H}$ has a component of
codimension at most one in $G/P_\alpha$.

\end{lemma}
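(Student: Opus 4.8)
The plan is to exploit the hypothesis $|\Bnn(G/H)|>1$ exactly as in the proof of Proposition~\ref{prop:solvable}. By Proposition~\ref{prop:transitive} there are a simple root $\alpha$ and $Z\in\Bnn(G/H)$ with $Z\ne G/H$, $\codim_{G/H}Z=1$, and $s_\alpha\cdot(Z)=(G/H)$. Writing $Z'\subset G/B$ for the corresponding $H$-stable subset, the requirements of the $s_\alpha$-action force us into type $(U)$ of the table of \S\ref{s:action}: indeed type $(G)$ has a singleton block, while types $(N)$ and $(T)$ fix $(Z)$, contradicting $s_\alpha\cdot(Z)=(G/H)\ne(Z)$ (and $r(Z)=r(G/H)$ rules out $(N)$). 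Just as in Proposition~\ref{prop:solvable}, being in type $(U)$ yields $\pi_\alpha(Z')=G/P_\alpha$ and shows that the generic isotropy group of $H$ in $Z'$, hence in $G/P_\alpha$, contains a subgroup isomorphic to $\GG_a$. So the first step merely records that the relevant $\alpha$ is produced by the transitivity of the $\Wt$-action.

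Next I would identify this $\GG_a$. Since $H$ is reductive with $\operatorname{rk}_{\mathrm{ss}}H=1$, every nontrivial connected unipotent subgroup of $H$ lies in $H'$ (the central torus $Z(H)^\circ$ contains no unipotents), and in a group of type $\sA_1$ all one-dimensional unipotent subgroups are conjugate; hence the $\GG_a$ above equals ${}^hU_H$ for some $h\in H$. For generic $x\in G/P_\alpha$ we therefore have ${}^hU_H\subseteq H_x$ for a suitable $h$, i.e. $x\in h\cdot(G/P_\alpha)^{U_H}$. Writing $F=(G/P_\alpha)^{U_H}$, this says precisely that $H\cdot F$ is dense in $G/P_\alpha$.

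It remains to bound $\dim F$ from below. Let $N=N_H(U_H)$; since $U_H$ is the unipotent radical of a Borel subgroup $B_H$ of $H$ we have $N=B_H$, the quotient $H/N\cong\PP^1$ parametrizing the $H$-conjugates of $U_H$, so $\dim N=\dim H-1$. As $N$ normalizes $U_H$, it stabilizes the set $F$, and the multiplication map $H\times F\to G/P_\alpha$ is constant along the orbits $(h,f)\mapsto(hn^{-1},nf)$, $n\in N$; it therefore factors through the associated bundle $H\times_{N}F$, whose dimension is $\dim H+\dim F-\dim N=\dim F+1$. The image of $H\times_{N}F$ is $H\cdot F$, which is dense, whence $\dim(G/P_\alpha)\le\dim F+1$. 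Thus $F$ has a component of codimension at most one in $G/P_\alpha$, as claimed.

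The main obstacle is the middle step: one must ensure that the $\GG_a$ appearing in the generic stabilizer is genuinely $H$-conjugate to $U_H$ (so that a \emph{single} fixed-point variety $F$, and not an uncontrolled family of such varieties, sweeps out $G/P_\alpha$) and that its normalizer has codimension exactly one in $H$. Both facts rest on $\operatorname{rk}_{\mathrm{ss}}H=1$, which supplies the conjugacy of one-dimensional unipotent subgroups and the identity $N_H(U_H)=B_H$; granting these, the concluding dimension count via the associated bundle $H\times_{N}F$ is purely formal.
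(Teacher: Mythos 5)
Your proof is correct and follows essentially the same route as the paper's: transitivity plus the type $(U)$ analysis produce a simple root $\alpha$ whose generic isotropy group contains a $\GG_a$, the hypothesis $\operatorname{rk}_{\mathrm{ss}}H=1$ conjugates that $\GG_a$ into $U_H$ so that $H\cdot(G/P_\alpha)^{U_H}$ is dense, and the codimension-one normalizer $N_H(U_H)=B_H$ yields the dimension bound. Your associated-bundle count via $H\times_N F$ is simply a more explicit rendering of the paper's terser step that the $U_H$-fixed points have codimension at most one in each general $H$-orbit, hence in $G/P_\alpha$.
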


\begin{proof}

  Arguing as in the proof of Proposition \ref{prop:solvable}, there is
  a simple root $\alpha$ such that the isotropy group $H_y$ is not
  reductive for
  general $y\in G/P_\alpha$. Then ${\rm rk}_{\rm ss}H=1$ implies that $H_y$
  contains a conjugate of $U_H$ for general $y\in G/P_\alpha$ or, in
  other words, a general $H$-orbit in $G/P_\alpha$ contains an
  $U_H$-fixed point. The normalizer of $U_H$ in $H$ is of codimension
  one. Thus, the $U_H$-fixed points are of codimension at most one in
  any general orbit $Hy$. We conclude that $(G/P_\alpha)^{U_H}$ has a
  component $C$ which is of codimension at most one in $G/P_\alpha$.

\end{proof}

Now we analyze the situation of the preceding lemma further.

\begin{lemma}\label{lemma:red2}

  Let $G,H,U_H$ be as in Lemma~\ref{lemma:UH-fixed}. If $G$ is of type
  $\sB_2$ or $\sG_2$ then $H$ is conjugate to either $L$ or
  $L'$ where $L$ is a Levi subgroup of a parabolic subgroup of $G$.

\end{lemma}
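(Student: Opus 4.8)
The plan is to leverage the geometric constraint from Lemma~\ref{lemma:UH-fixed} to pin down $U_H$ up to conjugacy, and then to reconstruct the reductive subgroup $H$ around it. Fix the simple root $\alpha\in S$ provided by Lemma~\ref{lemma:UH-fixed}, so that the fixed-point set $(G/P_\alpha)^{U_H}$ has a component $C$ of codimension at most one in $G/P_\alpha$ (which has dimension $3$ in type $\sB_2$ and $5$ in type $\sG_2$). Let $u$ generate $U_H\cong\GG_a$; its nontrivial elements are unipotent and lie in a single nilpotent conjugacy class, and the whole analysis is invariant under replacing $H$ by a $G$-conjugate.

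First I would show that the size of $C$ forces $U_H$ to be a root subgroup. Since $(G/P_\alpha)^{U_H}\subseteq(G/P_\alpha)^{u}$, the codimension-one hypothesis gives a lower bound on $\dim(G/P_\alpha)^{u}$. The dimension of the fixed locus of a unipotent element on a partial flag variety is controlled by its nilpotent orbit $\mathcal O$ and the intersection of $\mathcal O$ with $\mathfrak{p}_\alpha$: a large fixed locus forces $\mathcal O$ to be small. Concretely, I expect this bound to exclude every orbit except the minimal ones, that is, the orbits of long-root and of short-root vectors, and moreover to force $U_H$ itself to be $G$-conjugate to a root subgroup $U_\gamma$. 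Here the two root lengths, which is exactly what distinguishes $\sB_2$ and $\sG_2$ from the cases treated elsewhere, produce the two alternatives that eventually give $L$ and $L'$.

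Next I would determine precisely which root subgroups survive. For a representative of each length I would compute $\codim_{G/P_\alpha}(G/P_\alpha)^{U_\gamma}$ directly, using the open cell $U^-/(U^-\cap P_\alpha)$ and commutation relations of the kind recorded in Lemma~\ref{lemma:U}, and keep only those $\gamma$ for which the codimension is at most one. After conjugating by a suitable element of the Weyl group I may assume $\gamma$ is simple, so that $U_\gamma$ is the unipotent radical of a Borel subgroup of the Levi $L=L_\gamma=\langle T,U_\gamma,U_{-\gamma}\rangle$ of the minimal parabolic $P_\gamma$.

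Finally I would rebuild $H$. Knowing that $U_H$ is $G$-conjugate to $U_\gamma$, and that $H$ is connected reductive of semisimple rank one with $U_H$ as the unipotent radical of a Borel of $H$, I would use a maximal torus $T_H\subseteq T$ normalizing $U_\gamma$ together with reductivity to identify the opposite root subgroup of $H$ with $U_{-\gamma}$; this forces the derived group $H'=\langle U_\gamma,U_{-\gamma}\rangle=L'$, while the central torus of $H$ lies in $T\subseteq L$. Hence $H$ is conjugate to $L$ or to $L'$. The main obstacle is the middle step: carrying out the fixed-point dimension computation carefully enough to exclude all non-minimal unipotent classes and all root subgroups in the wrong relative position. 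In type $\sG_2$ at $p=3$ the special isogeny between long and short roots (already flagged above via \cite{BT73}) requires separate attention, since there the orbit of short-root vectors does not behave as in other characteristics.
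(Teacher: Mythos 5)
Your overall architecture (pin $U_H$ down up to conjugacy as a root subgroup, then reconstruct $H$ around it) matches the paper's, but the central step --- showing that a one-parameter unipotent subgroup whose fixed-point set on $G/P_\alpha$ has a component of codimension $\le 1$ must be conjugate to a root subgroup $U_\gamma$ --- is exactly where you stop proving and start hoping (``I expect this bound to exclude every orbit except the minimal ones''). This is a genuine gap, and it hides two separate difficulties. First, you would have to actually compute $\dim(G/P_\alpha)^{u}$ for every nontrivial unipotent class of $\sB_2$ and $\sG_2$, on both partial flag varieties, in every characteristic $p\ge3$ (including $p=3$ for $\sG_2$, where the class structure is exceptional); you flag this as the main obstacle but do not carry it out. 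Second, and more subtly, even the statement ``every nonidentity element of $U_H$ lies in a root-element class'' does not formally yield ``$U_H$ is conjugate to $U_\gamma$ as a subgroup'': a copy of $\GG_a$ through a root element sits inside its centralizer in many ways, and ruling out twisted embeddings (e.g.\ built from additive polynomials) needs its own argument in positive characteristic. Your final reconstruction has a similar soft spot: if $H$ has rank one, its maximal torus need not separate $-\gamma$ from other negative roots of $G$, so the identification of the opposite root group of $H$ with $U_{-\gamma}$ is not automatic.

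The paper avoids all of this by using the $B^-$-orbit stratification of $G/P_\alpha$ instead of nilpotent orbits. After arranging $U_H\subset B^-$, it asks whether the codimension-one component $C$ of $(G/P_\alpha)^{U_H}$ meets the open $B^-$-orbit. If it does, the stabilizer of a general point of that orbit is conjugate to $TU_{-\alpha}$, and $U_H$, being a connected unipotent subgroup of it, is conjugate to $U_\alpha$. If it does not, $C$ is the closure of the codimension-one Bruhat cell, so $U_H$ lies in ${}^b(TU_0)$ for \emph{every} $b\in B^-$ (with $TU_0$ the stabilizer of a base point of that cell), hence in the largest connected subgroup of $U_0$ normal in $B^-$; a short root-by-root check shows this case is impossible for $\sG_2$ and forces $U_H$ to be a long root subgroup for $\sB_2$. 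Finally, instead of identifying the opposite root group of $H$, the paper conjugates so that $B\cap H$ is a Borel subgroup of $H$ containing $U_\gamma$ and a maximal torus, notes that $B\cap H\subseteq L\cap H$ is then parabolic in $H$, so $H/(L\cap H)$ is a projective subvariety of the affine variety $G/L$, whence $H\subseteq L$ and $H=L$ or $L'$. You would do well to adopt at least this last trick; it replaces your root-group identification with a two-line argument.
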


\begin{proof}

  By replacing $U_H$ by a conjugate, we may assume that $U_H\subset
  B^-$. Let $C$ be a component of $(G/P_\alpha)^{U_H}$ of codimension
  one. Then there are two possibilities: either $C$ meets the open
  $B^-$-orbit in
  $G/P_\alpha$ or not. We claim that in both cases $U_H$ is conjugate
  in $G$ to a root subgroup $U_{\gamma}$ for some simple root $\gamma$.
  Let $y\in C$ be general.

  In the first case, $B_y^-$ is conjugate to $TU_{-\alpha}$. Because
  of $U_H\subset B_y^-$ we conclude that $U_H$ is $G$-conjugate to
  $U_{\alpha}$, which implies the claim with $\gamma=\alpha$.

  In the second case, $C$ equals the Bruhat cell of codimension
  one. In that case $B_y^-$ is $B^-$-conjugate to $TU_0$ with
  $U_0=U_{-\beta}U_{-\gamma}$ where $\beta$ is the simple root
  different from $\alpha$ and $\gamma=s_\beta(\alpha)$. But now $U_H$
  is contained in every $B^-$-conjugate of $TU_0$, i.e., in the
  biggest connected subgroup $U_1$ of $U_0$ which is normal in
  $B^-$.

  Since $U_1$ is normalized by $T$, it is either trivial,
  $U_{-\beta}$, $U_{-\gamma}$, or $U_0$. It can't be trivial since it
  contains $U_H$. Moreover, a short case-by-case consideration shows
  that if $G=\mathsf G_2$ then none of the other three groups are
  normal in $B^-$.

Then $G$ is of type $\mathsf B_2$. Another short case-by-case
consideration shows that $U_1$ is either trivial (which is impossible
since $U_H\subseteq U_1$) or $U_{-\gamma}$ (where in this case $\gamma$
is long), proving the claim.

  Now we may assume that $U_H=U_{\gamma}$. Since both a maximal torus
  $S$ of $H$ and the maximal torus $T$ of $G$ are contained in the
  normalizer of $U_\gamma$ in $G$,
  we can replace $H$  by a conjugate such that $B\cap H$ contains both $S$
  and $U_\gamma$. The intersection $B\cap H$ is then a Borel subgroup of $H$. Let
  $L$ be the Levi subgroup corresponding to $\gamma$. Then $B\cap
  H\subseteq L\cap H$ is parabolic in $H$. Thus $H/L\cap H$ is a
  projective subvariety of the affine variety $G/L$. Thus $H\subseteq
  L$ proving the lemma.
\end{proof}

\begin{corollary}\label{cor:red}
Let $G$ be a semisimple group of rank $2$ and $H\subset G$ a connected
reductive subgroup of semisimple rank $1$. Then $H$ is spherical, or
$|\Bnn(G/H)|=1$, or $H=L$ or $L'$ where $L$ is a Levi subgroup of a
proper parabolic subgroup $P$ of $G$.
\end{corollary}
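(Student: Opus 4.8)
The plan is to reduce immediately to the interesting case and then argue according to the type of $G$. If $H$ is spherical or $|\Bnn(G/H)|=1$ the assertion holds trivially, so I assume $H$ is non-spherical with $|\Bnn(G/H)|>1$; then the hypotheses of Lemma~\ref{lemma:UH-fixed} are satisfied, and I fix a maximal connected unipotent subgroup $U_H\cong\GG_a$ of $H$. When $G$ is of type $\sB_2$ or $\sG_2$, Lemma~\ref{lemma:red2} applies verbatim and shows that $H$ is conjugate to $L$ or $L'$ for a Levi subgroup $L$ of a proper parabolic; nothing more is needed for these two types.

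It remains to treat $G$ of type $\sA_1\times\sA_1$ and $\sA_2$, and here I would simply classify, up to conjugacy, the connected reductive subgroups of semisimple rank one, showing that each is spherical or equals $L$ or $L'$ for a Levi $L$. Since neither sphericity nor the Levi property changes under central isogeny, I may work with a convenient isogeny type. For type $\sA_2$ let $V$ be the three-dimensional natural module and $H'$ the (simple) derived group of $H$: either $V$ is $H'$-irreducible, in which case $H'$ preserves a nondegenerate symmetric form because $p>2$, so $H'=\SO_3$ is a symmetric subgroup and $G/H'$ is spherical; or $V\cong V(\omega_1)\oplus\Bbbk$ as an $H'$-module, placing $H'$ inside a Levi $L\cong\GL_2$ as its derived group. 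In both cases Schur's lemma identifies $C_G(H')$ (finite in the first case, equal to $Z(L)^\circ$ in the second), so the central torus of $H$ leaves no freedom: either $H=H'$ is spherical, or $H=L'$ or $H=L$.

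For type $\sA_1\times\sA_1$ I would use the two projections $p_1,p_2$ onto the factors: restricted to the simple group $H'$, each is trivial or surjective. If exactly one is surjective then $H'$ is one of the two factors, hence the derived group $L'$ of the Levi of a parabolic, and $C_G(H')$ meets the complementary factor only in its maximal torus, forcing $H=L'$ or $H=L$. If both are surjective, $H'$ is a twisted diagonal; as every automorphism of an $\sA_1$-group is inner, $H'$ is conjugate to the diagonal $\Delta$, and $G/\Delta$, being the group variety of an $\sA_1$-group under left-right translation, is spherical, while $C_G(\Delta)$ is finite, so $H=\Delta$. The part supplied by Lemma~\ref{lemma:red2} is immediate; the genuine work lies in this classification, where the two delicate points are excluding extra central tori by pinning down $C_G(H')$ and certifying that the irreducible (principal) and diagonal embeddings are spherical---both symmetric or group varieties, where the standing hypothesis $p>2$ is exactly what guarantees that $\SO_3$ is a bona fide symmetric subgroup in type $\sA_2$.
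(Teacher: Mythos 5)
Your overall route is the paper's: Lemma~\ref{lemma:red2} disposes of types $\sB_2$ and $\sG_2$, and for $\sA_1\times\sA_1$ and $\sA_2$ one classifies the connected reductive subgroups of semisimple rank one directly (the paper organizes this by the rank of $H$ rather than by computing $C_G(H')$, but the content is the same). There is, however, one step that fails as written, precisely where positive characteristic bites. In type $\sA_1\times\sA_1$, when both projections restricted to $H'$ are surjective they are \emph{isogenies} onto the factors, not necessarily isomorphisms: each may be a power of the Frobenius endomorphism composed with an inner automorphism. The subgroup $\{(g,F^{n}(g))\mid g\in\SL(2)\}$ with $n>0$ is a closed connected subgroup of type $\sA_1$ with both projections surjective, yet it is \emph{not} conjugate to the standard diagonal (its Lie algebra projects trivially to the second factor, since $dF=0$). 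So the inference ``every automorphism of an $\sA_1$-group is inner, hence $H'$ is conjugate to $\Delta$'' does not apply. The conclusion is still true, but it needs the paper's extra idea: there is an inseparable isogeny $\phi$ of $G$ carrying $H$ onto the standard diagonal, and the existence of a dense $B$-orbit is preserved under such bijective morphisms, so $H$ is spherical. Without this (or a direct computation for the twisted diagonals), your argument misses these subgroups.

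Two smaller points. In type $\sA_2$ you pass from ``$V$ is $H'$-reducible'' directly to ``$V\cong V(\omega_1)\oplus\Bbbk$''; this uses that the extension of the two-dimensional composition factor by the trivial one splits, i.e.\ $H^1(\SL(2),V(\omega_1))=0$, which holds for $p\ne2$ but fails for $p=2$ (the paper cites \cite[Proposition~2.6]{St10} exactly here). This, rather than the orthogonality of the three-dimensional irreducible, is the decisive use of $p>2$ in the $\sA_2$ case, and the two-dimensional factor may a priori be a Frobenius twist of $V(\omega_1)$ (harmless, since the image subgroup is unchanged). Finally, $C_G(\SL(2)\times 1)$ in $\SL(2)\times\SL(2)$ contains the entire complementary factor, not just its maximal torus; what you actually need is that the connected center of $H$ is a torus inside that factor, hence conjugate into its maximal torus, which does give $H=L'$ or $H=L$.
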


\begin{proof}
If the rank of $H$ is $2$ then $H$ is a Levi subgroup of a parabolic
subgroup of $G$.

If the rank of $H$ is $1$ then $H$ is of type $\sA_1$. If
$G=\sA_1\times\sA_1$ then $H$ is either one of the factors, in which
case $|\Bnn(G/H)|=1$, or $H$ is embedded
diagonally, possibly by a power of the Frobenius morphism. Thus, there
is an inseparable isogeny $\phi$ of $G$ such that $\phi(H)$ is the
diagonal subgroup of $G$. This shows that $H$ is spherical.

If $G=\sA_2$ then $H$ is embedded into $G$ via a $3$-dimensional
representation $V$. Because $p\ne2$, there are no non-trivial
extensions of a two dimensional representation $\Bbbk^2$ and the trivial
representation $\Bbbk$ (see e.g.\ \cite[Proposition~2.6]{St10}). Thus,
$V=\Bbbk^2\oplus \Bbbk$, or $V=\Bbbk^3$ is
irreducible. In the first case $H=\SL(2)\times 1\subset G=\SL(3)$, in
the second case $H=\SO(3)\subset G=\SL(3)$. In both cases $H$ is
spherical.

Ir remains to check the case $G=\sB_2$ or $G=\sG_2$ and $H$ is of
semisimple rank one. In this case the corollary follow from
Lemma~\ref{lemma:red2}.
\end{proof}

\section{Levi subgroups}\label{s:levi}

In this section we discuss subgroups $H$ of $G$ that are Levi subgroups of
some parabolic, up to a $\Bbbk^*$-factor.

\begin{proposition}\label{prop:levi}
Let $G$ be simple of rank $2$, $p\geq 3$, let $P\supsetneq B$ be a
proper parabolic subgroup of $G$, and let $H$ be a Levi subgroup of $P$.
If $G$ has type $\sA_1\times\sA_1$, $\mathsf A_2$ or $\mathsf B_2$ then
$H$ is spherical; if $G$ has type $\mathsf G_2$ then $H$ is spherical
or $|\Bnn(G/H)|=1$.
\end{proposition}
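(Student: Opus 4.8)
The plan is to analyze each rank-$2$ type separately, since the Levi subgroups of proper parabolics are completely explicit in each case, and then to show sphericity by exhibiting a dense $B$-orbit on $G/H$ (equivalently, that $H$ has a dense orbit on $G/B$, i.e.\ on the flag variety). A Levi subgroup $H$ of a proper parabolic $P$ has the form $H = T \cdot H'$ where $H'$ is the semisimple part generated by one simple root, so $H$ has semisimple rank one and $H'$ is of type $\sA_1$. The torus $T$ is already a maximal torus of $G$, so $\dim H = \dim T + 2 = 2 + 2 = 4$ when $H'$ is three-dimensional.

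First I would dispose of the types $\sA_1\times\sA_1$ and $\mathsf A_2$ by a dimension count together with a direct orbit analysis. For $\sA_1\times\sA_1$ the Levi of a proper parabolic is $\SL(2)\times T_1$ (one factor plus the central torus of the other), and $G/H$ is essentially $\PP^1$, on which $B$ clearly acts with a dense orbit. For $\mathsf A_2$, the Levi is $\GL(2)$ embedded in $\SL(3)$, and sphericity of $\GL(2)$ (or $\SL(2)\cdot T$) in $\SL(3)$ is classical; I would confirm it by checking $\dim G/H = \dim B$ and that the generic $B$-stabilizer is finite, or simply invoke that this is a minuscule-type situation where a dense $B$-orbit exists. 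The same dimension heuristic governs $\mathsf B_2$: here $H$ is a Levi of type $\sA_1\times\mathbb{G}_m$, and I would verify the dense $B$-orbit by the same codimension computation, $\dim G/H = \dim U = \dim R^+$, combined with the fact that $c(G/H)=0$ precisely when a generic $B$-orbit fills up $G/H$.

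The genuinely delicate case is $\mathsf G_2$, where the two Levi subgroups (one for the long simple root, one for the short) behave differently. For one of them I expect sphericity to persist, but for the other the proposition only asserts the weaker conclusion $|\Bnn(G/H)|=1$, so I cannot hope to prove sphericity uniformly. Here I would compute $c(G/H)$ directly: $\dim G = 14$, $\dim B = 8$, and $\dim H = \dim T + \dim \sA_1 = 2+3 = 5$, so $\dim G/H = 9$ while $\dim B = 8$, forcing $c(G/H) \geq 1$. Thus in $\mathsf G_2$ at least one Levi is \emph{not} spherical, and the real work is to show that for the non-spherical Levi we still have $|\Bnn(G/H)|=1$.

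The main obstacle will therefore be the non-spherical $\mathsf G_2$ Levi. To handle it I would use Proposition~\ref{prop:transitive}: if $|\Bnn(G/H)|>1$ there must exist $Z\in\Bnn(G/H)$ of codimension one with $\dim s_\alpha\cdot(Z)=\dim Z+1$ for some simple root $\alpha$, which means the generic isotropy group of $H$ acting on $G/P_\alpha$ is non-reductive and contains a $\mathbb{G}_a$. Arguing exactly as in Proposition~\ref{prop:solvable} and Lemma~\ref{lemma:UH-fixed}, this would force the maximal unipotent $U_H\cong\mathbb{G}_a$ of $H$ to fix a codimension-at-most-one component of some $G/P_\alpha$. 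I would then check, by the same $T$-normalization and root-subgroup bookkeeping used in Lemma~\ref{lemma:red2}, that for the non-spherical $\mathsf G_2$ Levi no such fixed-point component of the requisite codimension exists, yielding a contradiction and hence $|\Bnn(G/H)|=1$. The heart of the argument is thus a concrete verification inside $\mathsf G_2$ that the unipotent radical of $H$ cannot have a fixed locus large enough in the relevant partial flag variety; this is where the characteristic-$p$ subtleties (special isogenies for short roots when $p=3$) and the explicit $\mathsf G_2$ root combinatorics must be checked carefully.
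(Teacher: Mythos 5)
Your overall architecture coincides with the paper's: the types $\sA_1\times\sA_1$, $\sA_2$ and $\sB_2$ are handled by known sphericity of the Levi subgroups (the paper simply cites Brundan's classification rather than redoing a dimension/orbit count), and for $\sG_2$ one assumes $|\Bnn(G/H)|>1$, invokes Lemma~\ref{lemma:UH-fixed} to produce a component of $(G/P_\alpha)^{U_H}$ of codimension at most one in some $G/P_\alpha$, and derives a contradiction by bounding the dimension of these fixed-point loci. Before addressing the main issue, one factual correction: \emph{neither} Levi subgroup of $\sG_2$ is spherical. Both have dimension $4$ (your count $\dim H=\dim T+\dim\sA_1=5$ double-counts the one-dimensional torus inside $H'$), so $\dim G/H=10>8=\dim B$ for both simple roots, and the fixed-point argument must be run for both parabolics; the paper indeed proves $|\Bnn(G/L)|=1$ in both cases.

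The substantive gap is at the crucial step: the claim that $(G/P_{\alpha_i})^{U_H}$ has no component of dimension $4$ (i.e.\ of codimension $\le 1$ in the $5$-dimensional $G/P_{\alpha_i}$) is asserted but not proved, and the tool you point to cannot be reused as stated. The two-case bookkeeping of Lemma~\ref{lemma:red2} is engineered to \emph{conclude} that $U_H$ is conjugate to a simple root subgroup and hence that $H$ sits inside a Levi; for $H$ already a Levi this conclusion is automatic, so running that argument yields no contradiction. What is actually needed is a genuine dimension bound on the fixed locus: for instance, on the open $B^-$-orbit of $G/P_\alpha$ the $U_H$-fixed points form (a coset of) the transporter of $U_H$ into $U_{-\alpha}$ inside $U^-$, essentially a normalizer of a root subgroup, and one must verify using the $\sG_2$ commutator relations that this has dimension at most $4$ in the $6$-dimensional $U^-$, and separately treat the codimension-one Bruhat cell. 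This computation is exactly where the paper spends its effort: it works through the two big Bruhat cells of $G/P_{\alpha_1}$ and $G/P_{\alpha_2}$ with explicit structure constants to show all components of the fixed loci have dimension $\le 3$, supplemented for $P=P_{\alpha_1}$ and $p\ge 5$ by a shortcut through the $7$-dimensional module $V(\omega_1)$ (which decomposes into three irreducible $H$-modules, bounding $(G/P_{\alpha_1})^{U_H}$ by dimension $2$), and for $p=3$ by the special isogeny exchanging the two parabolics. Some version of this explicit verification is indispensable; without it the $\sG_2$ case of the proposition remains unproved.
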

\begin{proof}
We may assume that $G$ is simply connected. If it has type
$\sA_1\times\sA_1$ then $H$ is spherical because a maximal torus of
$\SL(2)$ is spherical in $\SL(2)$ (see \cite[Theorem 4.3]{Br98}).

If $G$ has type $\mathsf A_2$ or $\mathsf B_2$ then $H$ itself appears
in \cite[Table 1]{Br98}, hence we apply again \cite[Theorem 4.3]{Br98}.


Suppose that $G$ has type $\mathsf G_2$, and consider the case
$P=P_{\alpha_1}$. Assume also $p\geq 5$: indeed, if $p = 3$ then
$P_{\alpha_1}$ is sent onto $P_{\alpha_2}$ by a special isogeny
of $G$, so we refer to our later discussion of the case
$P=P_{\alpha_2}$.

Thanks to the assumption $p\geq 5$ the module $V=V_G(\omega_1)$ is also
the Weyl module of $G$ associated to $\omega_1$ (see e.g.\
\cite[Theorem 1]{Pr88}), and $G/P_{\alpha_1}$
is a subvariety of $\PP(V)$. Let $U_H$ be a maximal unipotent subgroup
of $H$. The weights of the $G$-module $V$ imply that the latter is the sum of
three irreducible $H$-modules, therefore $(G/P_{\alpha_1})^{U_H}$ has
components of dimension at most $2$.

If $|\Bnn(G/H)|>1$ then Lemma~\ref{lemma:UH-fixed} applies: since $\dim
G/B=6$, then $(G/P_{\alpha_1})^{U_H}$ or $(G/P_{\alpha_2})^{U_H}$ has a
component $C$ of dimension $4$. The first case is excluded, so consider
the second case. The unipotent group $U_H$ has at least one fixed point
in $\pi_{\alpha_2}^{-1}(c)$ for all $c\in C$. It follows that
$(G/P_{\alpha_1})^{U_H}$ has a component of dimension at least $3$,
which is also impossible. Hence$|\Bnn(G/H)|=1$

It remains the case where $P=P_{\alpha_2}$ and $p\geq 3$. Let $U_H$ be
a maximal connected unipotent subgroup of $H$. We claim that the set of
$U_H$-fixed points on $G/P_{\alpha_i}$ has components of dimension at
most $3$ for both $i=1,2$: from Lemma~\ref{lemma:UH-fixed} we obtain $|
\Bnn(G/H)|=1$ as above.

To prove the claim, we use the commutation relations of
\cite[Expos\'e XXIII: Proposition 3.4.1(iii)]{SGA3} as in the proof of
Lemma~\ref{lemma:U}. To simplify notations, write
$\alpha=\alpha_1$ and $\beta=\alpha_2$.

Denoting $Q = P_{\alpha}$, we have
\[
G/Q = (Q^u w_0 Q/Q) \cup (Q^u s_{\beta} w_0 Q/Q) \cup \,
\textrm{(subvarieties of dimension $\leq 3$)}.
\]
Let us compute the $U_H$-fixed points on
$Q^u w_0 Q/Q$. We may assume that $U_H$ is the set of elements of the
form $u=u_{\beta}(x)$ for $x\in \Bbbk$.

If $v\in Q^u$, then $v w_0 Q\in Q^uw_0Q/Q$ is fixed under the action of
$u$ if and only if $v^{-1}uv\in {}^{w_0}Q$. Now write $v^{-1}$ as a
product:
\[
v^{-1} = u_{3\alpha+\beta}(y_1)u_{2\alpha+\beta}(y_2)u_{\alpha+\beta}
y_3)u_{3\alpha+2\beta}(y_4)u_{\beta}(y_5).
\]
Then:
\[
v^{-1} u_\beta(x) v = u_{3\alpha+2\beta}(-xy_1)u_\beta(x).
\]
This belongs to ${}^{w_0}Q$ only if $x=0$, therefore there are no
$U_H$-fixed points on $Q^u w_0 Q/Q$.

Let us do the same with $Q^u s_{\beta} w_0 Q/Q$, and call $w_1 =
s_\beta w_0$. This set is an affine space of dimension $4$, and we may
take $y_1,\ldots,y_4$ as its coordinates.

A point $v w_1 Q$ is fixed by $u$ if and only if $v^{-1} u v \in
{}^{w_1}Q$. This time, $u_{3\alpha+2\beta}(-xy_1)u_\beta(x)$ lies in
${}^{w_1}Q$ for all $x$ if and only if $y_1=0$. It follows that the set
of $U_H$-fixed points on $Q^u s_{\beta} w_0 Q/Q$ is irreducible of
dimension $3$.

Finally, we discuss the other parabolic $P=P_\beta$ with the same
procedure. Write
\[
G/P = (P^u w_0 P/P) \cup (P^u s_{\alpha} w_0 P/P) \cup \,
\textrm{(subvarieties of dimension $\leq 3$)},
\]
and consider $v\in P^u$. The point $v w_0 P$ is fixed by $u\in U_H$ if
and only if $v^{-1}uv\in{}^{w_0}P$. If:
\[
v^{-1} = u_{\alpha}(y_1)u_{3\alpha+\beta}(y_2)u_{3\alpha+2\beta}
y_3)u_{2\alpha+\beta}(y_4)u_{\alpha+\beta}(y_5)
\]
and $u=u_\beta(x)$, then:
\[
\begin{array}{ll}
v^{-1}uv & = \big(u_{\alpha}(-y_1)u_{3\alpha+\beta}(-y_2)\big)
u_\beta(x) \big(u_{3\alpha+\beta}(y_2)u_{\alpha}(y_1)\big)\\
& = u_\beta(x)u_{\alpha+\beta}(xy_1)u_{2\alpha+\beta}
(xy_1^2)u_{3\alpha+\beta}(xy_1^3)u_{3\alpha+2\beta}(x^2y_1^3-xy_2).
\end{array}
\]
It belongs to ${}^{w_0}P$ for all $x$ if and only if $y_1=y_2=0$, hence
the set of $U_H$-fixed points on $P^u w_0 P/P$ is $3$-dimensional. On
the other hand, for no $v\in P^u$ we have $v^{-1}uv\in {}^{s_\alpha
w_0}P$ for all $x$, therefore $U_H$ has no fixed points on $P^u
s_\alpha w_0 P/P$, and the proof is complete.
\end{proof}

\begin{corollary} \label{cor:levi}
Let $H \subset G$ be a connected subgroup. Suppose that $L'\subseteq H
\subseteq L$, where $L$ is a Levi subgroup of some proper parabolic
subgroup $P\supsetneq B$. Then Theorem~\ref{thm:waction} holds for $H$.
\end{corollary}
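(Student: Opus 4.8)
The plan is to identify $\Bnn(G/H)$ with $\Bnn(G/L)$ by means of Lemma~\ref{lemma:normtorus} applied to the chain $H\subseteq L$, and then to read off the desired $W$-action from Proposition~\ref{prop:levi} together with the spherical case of \cite[Theorem 4.2]{Kn95}. In other words, I would take $K=L$ in Lemma~\ref{lemma:normtorus}, so that the only thing to check on the group-theoretic side is that $H$ is normal in $L$ with diagonalizable quotient.

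First I would verify these two hypotheses. Since $P\supsetneq B$ is a proper parabolic of the rank-two group $G$, the Levi subgroup $L$ has semisimple rank one, so its derived subgroup $L'$ is normal in $L$ and $L/L'$ is a one-dimensional torus. Because $L'\subseteq H\subseteq L$, the image $H/L'$ is a closed subgroup of the abelian group $L/L'$ and is therefore normal; pulling back along the quotient map, $H$ is normal in $L$. Moreover $L/H\cong(L/L')/(H/L')$ is a quotient of a torus, hence diagonalizable. Thus Lemma~\ref{lemma:normtorus} provides a $\Wt$-equivariant bijection $f\colon\Bnn(G/L)\to\Bnn(G/H)$.

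It then remains to transport the descent of the $\Wt$-action to $W$ through $f$. By Proposition~\ref{prop:levi} there are only two possibilities: either $L$ is spherical, or $G$ is of type $\sG_2$ and $|\Bnn(G/L)|=1$. In the first case the $\Wt$-action on $\Bnn(G/L)$ factors through $W$ by the spherical case of \cite[Theorem 4.2]{Kn95} (applicable since $\Char\Bbbk\ne2$), and in the second case it factors through $W$ trivially. Since $f$ is a $\Wt$-equivariant bijection between finite $\Wt$-sets, a word in the generators $s_\alpha$ acts as the identity on $\Bnn(G/H)$ if and only if it does so on $\Bnn(G/L)$; hence the $\Wt$-action on $\Bnn(G/H)$ also factors through $W$, which is precisely the assertion of Theorem~\ref{thm:waction} for $H$.

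I expect no serious obstacle here: the substantive geometric work, in particular the delicate $\sG_2$ analysis, has already been carried out in Proposition~\ref{prop:levi}. The one point that must be handled with care is the observation that $K=L$ itself---rather than some auxiliary spherical overgroup---already satisfies the hypotheses of Lemma~\ref{lemma:normtorus}, together with the fact that the property \emph{``the $\Wt$-action factors through $W$''} is invariant under $\Wt$-equivariant bijections. This last point is what lets me dispose of the possibly non-spherical $\sG_2$ Levi via the singleton observation rather than by an appeal to sphericity.
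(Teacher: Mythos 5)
Your proof is correct and follows essentially the same route as the paper: reduce from $H$ to $L$ via Lemma~\ref{lemma:normtorus} (noting $H\trianglelefteq L$ because $L/L'$ is abelian, with $L/H$ a quotient of the one-dimensional torus $L/L'$, hence diagonalizable) and then invoke Proposition~\ref{prop:levi} together with the spherical case of \cite[Theorem 4.2]{Kn95}. The only cosmetic difference is that the paper disposes of type $\sA_1\times\sA_1$ by a separate direct observation (one simple reflection acts trivially on $\Bnn(G/H)$) and reserves the Lemma~\ref{lemma:normtorus} reduction for simple $G$, whereas you treat all types uniformly; since Proposition~\ref{prop:levi} explicitly covers type $\sA_1\times\sA_1$ as well, your uniform treatment is equally valid.
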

\begin{proof}
Suppose that $G$ has type $\mathsf A_1\times\mathsf A_1$ and is simply
connected, so $G=\SL(2)\times\SL(2)$. Without loss of generality
$P=\SL(2)\times B_{\SL(2)}$, where $B_{\SL(2)}$ is a Borel subgroup of
$\SL(2)$. It follows that $H=\SL(2)\times K$ where $K$ is a subgroup of
a maximal torus of $B_{\SL(2)}$. Therefore the reflection associated to
one of the simple roots of $G$ acts trivially on $\Bnn(G/H)$, and
Theorem~\ref{thm:waction} follows.

If $G$ is simple, apply Lemma~\ref{lemma:normtorus} to $H\subseteq L$
and then Proposition~\ref{prop:levi} to $L$: the corollary follows.
\end{proof}

\section{Other subgroups}\label{s:other}

In this section we finish the proof of Theorem~\ref{thm:waction},
discussing the remaining connected subgroups $H$ of $G$.

The first result regards the representation theory of $\SL(2)$ and
will be useful in subsequent proofs.

\begin{lemma}\label{lemma:sl2}
Let $V$ be a finite dimensional $\SL(2)$-module, and $R\subseteq V$ an
$\SL(2)$-stable additive subgroup. Suppose that one of the following
two conditions is satisfied:
\begin{enumerate}
\item\label{lemma:sl2:simple} The module $V$ is nontrivial and simple. 
\item\label{lemma:sl2:kpsimple} The characteristic $p$ of $\Bbbk$ is $\neq 2$,
the subgroup $R$ is closed and connected, and $V= V(0)\oplus V'$ where $V'$ is
nontrivial and simple.
\end{enumerate}
Then $R$ is a submodule of $V$.
\end{lemma}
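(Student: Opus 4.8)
The plan is to show that the a priori merely additive subgroup $R$ is in fact a $\Bbbk$-subspace; since $R$ is already $\SL(2)$-stable, this is exactly what it means for $R$ to be a submodule. The one mechanism that can force a merely additive $R$ to absorb arbitrary scalars from $\Bbbk$ is the action of a maximal torus $T\cong\GG_m$ of $\SL(2)$: on a weight space $V_m$ of nonzero weight $m$ the torus acts by $t\mapsto t^m$, and since $\Bbbk$ is algebraically closed this power map is surjective onto $\Bbbk^\times$. Consequently, as soon as $R$ contains one nonzero vector lying in a single weight space $V_m$ with $m\ne0$, it already contains the whole line $\Bbbk v_m$. This observation splits the proof of case \eqref{lemma:sl2:simple} into two parts: first produce such a vector, and then bootstrap from its line to all of $V$.

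For the bootstrap I would argue as follows, assuming $\Bbbk v_m\subseteq R$ for a single weight vector $v_m$. Because $V$ is simple, $v_m$ generates $V$ as an $\SL(2)$-module, so the vectors obtained from $v_m$ by the two one-parameter unipotent subgroups $u^{\pm}(s)$ span $V$. For each $\lambda\in\Bbbk$ and each $s$ we have $u^{\pm}(s)(\lambda v_m)=\lambda\,u^{\pm}(s)v_m\in R$, and in divided-power form $u^{+}(s)v_m=\sum_{k\ge0}s^{k}\,e^{(k)}v_m$, where the vectors $e^{(k)}v_m$ lie in the pairwise distinct weight spaces $V_{m+2k}$ and the coefficients $s^{k}$ have distinct degrees, hence are $\Bbbk$-linearly independent. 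Taking a $\Bbbk$-linear combination $\sum_j\lambda_j\,u^{\pm}(s_j)v_m\in R$ over sufficiently many distinct $s_j$ and inverting the resulting Vandermonde system over the field $\Bbbk$ realizes every $e^{(k)}v_m$, and in fact every $\Bbbk$-multiple of it; together with the analogous statement for $u^{-}$ this gives $V\subseteq R$. Crucially this step uses genuine field scalars $\lambda_j$, which are available precisely because $\Bbbk v_m\subseteq R$.

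To produce the initial nonzero-weight vector I would take $0\ne v\in R$ whose support in the weight decomposition $V=\bigoplus_m V_m$ is of minimal size (the weight spaces of a simple $\SL(2)$-module being one-dimensional) and strip weights using the torus: if $m_1,m_2$ are two weights in the support, choosing $t$ with $t^{m_1}=1$ but $t^{m_2}\ne1$ yields $t\cdot v-v\in R$ with strictly smaller support, contradicting minimality unless no such $t$ exists. Such $t$ fails to exist only when $m_1$ and $m_2$ have the same prime-to-$p$ part, i.e.\ their ratio is a power of $p$. This is the \emph{main obstacle}: the torus alone cannot separate two weight lines related by a Frobenius twist (the offending $R$ would be a graph-of-Frobenius type $\mathbb F_q$-subspace rather than a $\Bbbk$-subspace, and such pairs genuinely occur, e.g.\ the weights $2$ and $6$ of the simple $\SL(2)$-module of highest weight $8$ in characteristic $3$). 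I would break this case by first applying a unipotent $u^{\pm}(s)$, which shifts the relevant weights by $\pm2$; since $p>2$, the shift is prime to $p$ and destroys the exact $p$-power relation, after which the torus strips the support down to a single nonzero weight and the bootstrap finishes case \eqref{lemma:sl2:simple}.

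Finally, I would deduce case \eqref{lemma:sl2:kpsimple} from case \eqref{lemma:sl2:simple} by projecting. Write $\pi\colon V\to V'$ and $\pi_0\colon V\to V(0)$ for the $\SL(2)$-equivariant projections, recalling that the trivial summand $V(0)$ is one-dimensional. Both $\pi(R)$ and $S:=R\cap V'$ are $\SL(2)$-stable additive subgroups of the simple module $V'$, so by \eqref{lemma:sl2:simple} each equals $0$ or $V'$. Since $R$ is closed and connected, its image $\pi_0(R)$ and its slice $R\cap V(0)$ in $V(0)\cong\GG_a$ are closed connected subgroups, hence equal $0$ or all of $V(0)$. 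A short case analysis finishes the proof: if $S=V'$ or $R\cap V(0)=V(0)$ then $R$ visibly contains a full complementary summand and is a subspace; in the remaining case $S=0$ forces $\pi_0|_R$ to be injective, whence $\dim V'=\dim R\le1$, contradicting that $V'$ is nontrivial. Here the standing hypothesis $p\ne2$ enters exactly through case \eqref{lemma:sl2:simple}, via the weight-shift by $2$ used to resolve the Frobenius obstruction.
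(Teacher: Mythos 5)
Your overall plan for case (1) is reasonable, and you have correctly located the real difficulty: torus stripping cannot separate weights related by Frobenius, and the dangerous subgroups are exactly the graph-of-Frobenius ones (your example of the weights $2,6$ in the simple module of highest weight $8$ for $p=3$ is well chosen). The genuine gap is the step that is supposed to defeat this obstruction. A unipotent element does \emph{not} ``shift the relevant weights by $\pm2$'': by the divided-power expansion $u^{+}(s)v=\sum_{k\ge0}s^{k}E^{(k)}v$, the element $u^{+}(s)v-v\in R$ is supported on the whole set $\{m_i+2k \mid k\ge1,\ E^{(k)}v_{m_i}\ne0\}$, so the support is smeared upwards, not translated, and the $p$-power relation is not destroyed: from a support $\{m,pm\}$ one typically gets both $m+2k$ and $p(m+2k)=pm+2pk$ in the new support. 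Concretely, for $p=3$ let $V$ be the simple module of highest weight $26$, i.e.\ $V(2\omega_1)\otimes V(2\omega_1)^{[3]}\otimes V(2\omega_1)^{[9]}$ (superscripts denoting Frobenius twists), and let $v=v_2+v_6$ be the sum of the weight vectors of weights $2$ and $6=3\cdot2$: then $u^{+}(s)v-v$ is supported on $\{8,20,24,26\}$, which again contains the ratio-$3$ pair $\{8,24\}$. Moreover, additive recombination cannot remove such pairs: in $\sum_j n_j\bigl(u^{+}(s_j)v-v\bigr)$ the divided-power coefficients $c_k=\sum_j n_js_j^{k}$ satisfy $c_{pk}=c_k^{\,p}$ because $n_j\in\mathbb{F}_p$ --- the very Frobenius tie you are trying to break. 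So your procedure comes with no argument that it terminates in a single nonzero weight vector; ``the shift is prime to $p$'' is not such an argument.

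The strategy can be repaired, but by a monovariant you never state: each unipotent step raises the minimum of the support by at least $2$, stripping never lowers it, and weights are bounded above by the highest weight; hence the iteration stops, and it can only stop at a nonzero $U^{+}$-fixed vector, which for a simple module spans the highest weight line. In other words the repaired argument produces a highest weight vector in $R$; the torus then gives the whole line, and additivity plus $\SL(2)$-stability give $R\supseteq\operatorname{span}_{\Bbbk}(\SL(2)\cdot\Bbbk v)=V$. This is precisely the paper's (much shorter) route for case (1): it argues that $R$ must contain a highest weight vector by a scaling argument, then takes multiples and the span of its orbit. Three further remarks. (i) Your bootstrap claim that the vectors $u^{\pm}(s)v_m$ span $V$ is false for non-extremal $v_m$ (for a middle weight vector of $V(\omega_1)\otimes V(\omega_1)^{[p]}$ their span misses a weight line); what is true, and suffices, is that $\{w\in V \mid \Bbbk w\subseteq R\}$ is an $\SL(2)$-stable $\Bbbk$-subspace, hence equals $V$ as soon as it is nonzero. (ii) Pure stripping is also stuck on pairs $\{m,-m\}$, so the obstruction is ratio $\pm p^{k}$, not just $p^{k}$. (iii) Case (1) of the lemma carries no hypothesis on $p$ and the paper proves it without one; your argument invokes $p>2$ there, whereas the paper uses $p\ne2$ only in case (2), to split a lift of a highest weight vector of $V'$ into its two components. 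Your alternative treatment of case (2) --- closedness and connectedness in the $V(0)$-direction plus a dimension count when $R\cap V'=0$ --- is correct and is a legitimately different route, but it cannot stand until case (1) is actually proved.
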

\begin{proof}
We may suppose that $R\neq \{0\}$. Assume (\ref{lemma:sl2:simple}): then the union
of the sets $aR$ for all $a\in \Bbbk$ is a non-zero $\SL(2)$-submodule of
$V$, therefore equal to $V$. It follows that $R$ contains a highest
weight vector $v$, therefore all its multiples and all linear combinations of
elements of the form $gv$ for $g\in \SL(2)$. Hence $R=V$.

Assume now (\ref{lemma:sl2:kpsimple}), and let $R'$ be the projection of $R$ on
$V'$ along $V(0)$. Since it is an $\SL(2)$-stable additive subgroup of $V'$, as in
the proof of the first part of the Lemma we conclude that $R'$ is either $\{0\}$,
or contains a highest weight vector $v\in V'$.

In the first case $R$ is either $\{0\}$ or $V(0)$, since it is closed and connected.
In the second case, let $r\in R$ project to $v$: since $p\neq 2$ it is elementary to
show that $R$ also contains both
projections of $r$ in $V(0)$ and $V'$, and this completes the proof.
\end{proof}

\begin{lemma}\label{lemma:leviex}
Let $G$ be semisimple of rank $2$ and let $p\geq3$. Then any connected
subgroup $H$ of $G$ has a Levi subgroup. If $H$ is contained in a
parabolic subgroup $P$ of $G$, then any Levi subgroup of $H$ is
contained in a Levi subgroup of $P$.
\end{lemma}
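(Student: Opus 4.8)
The plan is to reduce the statement to a single nontrivial case — lifting a reductive quotient of semisimple rank one through the unipotent radical — and to carry out that lifting with the module-theoretic input of Lemma~\ref{lemma:sl2}, exploiting that in a rank-two group the unipotent radicals of proper parabolics have small nilpotency class.

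First I would dispose of the easy cases. If $H$ is reductive then $H^u=\{e\}$ and $H$ is its own Levi subgroup; if $H$ is connected solvable then a maximal torus of $H$ is a Levi subgroup by the structure theory of connected solvable groups. So assume $H$ is non-solvable with $H^u\neq\{e\}$. By the Borel–Tits theorem applied to the nontrivial connected unipotent group $H^u$, there is a proper parabolic $P\subsetneq G$ with $H\subseteq N_G(H^u)\subseteq P$ and $H^u\subseteq P^u$. One checks that $(H\cap P^u)^\circ$ is a connected normal unipotent subgroup of $H$, whence $H^u=(H\cap P^u)^\circ$, so the projection $\rho\colon P\to L:=P/P^u$ carries $H$ onto a reductive group $\overline H=\rho(H)\subseteq L$ with unipotent kernel $H^u$. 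As $H$ is non-solvable, so is $\overline H$; hence $P$ is one of the two maximal parabolics of $G$, whose Levi factor $L$ has semisimple rank exactly one. Consequently $M:=\overline H'$ is isomorphic to $\SL(2)$ or $\PGL(2)$.

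Now comes the core of the argument. A maximal torus $S$ of $H$ maps isomorphically onto a maximal torus of $\overline H$, its kernel $S\cap H^u$ being a unipotent subtorus and hence trivial; so tori lift for free, and it remains to lift $M$. Because $P$ is maximal, grading $P^u$ by the coefficient of the omitted simple root shows that $H^u\subseteq P^u$ is nilpotent of class at most three, and of class at most two in all cases except $G$ of type $\sG_2$ with $P=P_{\alpha_1}$. I would then lift $M$ along the lower central series $H^u=Z_0\supseteq Z_1\supseteq\cdots$, noting that $\overline H$ acts by conjugation on each characteristic subquotient: each $Z_i/Z_{i+1}$ is a vector group and a rational $M$-module, and a section of the extension is built one abelian layer at a time, the obstruction to passing a layer $V$ lying in $H^2(M,V)$ and the ambiguity in $H^1(M,V)$. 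This is exactly where $p\neq2$ and Lemma~\ref{lemma:sl2} enter: every $M$-stable additive subgroup of such a layer is an $M$-submodule, which lets me split off $M$-equivariant complements and realize the section concretely — lifting first $S$, then the two root subgroups $U_{\pm\theta}$ of $M$ as $S$-weight subgroups of $\rho^{-1}(U_{\pm\theta})$, and verifying that they satisfy the defining relations of $\SL(2)$, resp.\ $\PGL(2)$. The resulting reductive subgroup $L_H=\langle S,U_{\theta},U_{-\theta}\rangle$ is a Levi subgroup of $H$; in particular, taking $\overline H$ to be of semisimple rank two is impossible here, since that would force $L$ to have semisimple rank two, contradicting the maximality of $P$.

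Finally, for the compatibility assertion, suppose $H\subseteq P$ for an arbitrary parabolic $P$ and let $L_H$ be a Levi subgroup of $H$. Then $L_H$ is reductive, so $L_H\cap P^u$ is trivial and $\rho|_{L_H}$ is a closed immersion onto a reductive subgroup $\rho(L_H)\subseteq L$. By the same splitting-plus-uniqueness statement — now applied to the two sections $L_H$ and the canonical lift of $\rho(L_H)$ inside $L$ of the extension $1\to P^u\to\rho^{-1}(\rho(L_H))\to\rho(L_H)\to1$ — these two reductive subgroups are conjugate by some $u\in P^u$, and hence $L_H\subseteq u^{-1}Lu$, which is a Levi subgroup of $P$. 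I expect the main obstacle to be the char-$p$ bookkeeping of the lifting step, and above all the case $p=3$, $G=\sG_2$: there the unipotent radical of $P_{\alpha_1}$ attains nilpotency class three, so a uniform good-characteristic (exponential) argument is unavailable and one must instead use the special isogeny of $\sG_2$ to transfer the question to $P_{\alpha_2}$, where the class is two and the module decomposition of the layers together with Lemma~\ref{lemma:sl2} can be checked directly.
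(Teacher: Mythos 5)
Your overall architecture matches the paper's: reduce to $H$ non-solvable and non-reductive inside a maximal parabolic $P$ with $H^u\subseteq P^u$ and $H/H^u$ of semisimple rank one, then lift the reductive quotient through the lower central series of $P^u$, with the obstruction to each layer in $H^2$ and the ambiguity (used for the containment in a Levi of $P$) in $H^1$. But there is a genuine gap at the core: having correctly located the obstructions in $H^1(M,V)$ and $H^2(M,V)$, you claim that Lemma~\ref{lemma:sl2} is what makes them vanish. It is not. That lemma only says that an $M$-stable additive subgroup of a simple (or trivial-plus-simple) layer is a submodule; it lets you reduce to simple layers via the long exact sequence, but it says nothing about whether a non-split extension of $\GG_a$ by such a layer splits. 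In characteristic $p$ this is a real issue (Witt-vector-type extensions of $\GG_a$ by $\GG_a$ exist, and $H^1(\SL(2),V(\omega_1))\neq0$ when $p=2$), so your plan to ``realize the section concretely'' by lifting $S$ and the root subgroups $U_{\pm\theta}$ and ``verifying the defining relations'' is exactly the step that needs proof and is not supplied. The paper closes this by identifying the simple layers as $V(\omega_1)$, $V(2\omega_1)$ or $V(3\omega_1)$ and invoking Stewart's computations of $H^1$ and $H^2$ of simple $\SL_2$-modules for $p>2$ \cite{St10}; some input of this kind is indispensable and is the actual place where $p\neq2$ enters.

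A secondary inaccuracy: your worry about $G$ of type $\sG_2$ with $P=P_{\alpha_1}$ and $p=3$, and the proposed detour through the special isogeny, is misplaced. For $p=3$ that unipotent radical is in fact abelian (the relevant structure constants vanish), and in general the paper's induction along the lower central series is uniform in the number of steps; no transfer to $P_{\alpha_2}$ is needed. Also note the paper treats the case $H\supseteq T$ separately by an elementary limit argument ($\pi_\ell(h)\in\overline{\{tht^{-1}\mid t\in T\}}$), avoiding cohomology there entirely; your framework would absorb that case, which is fine, but it does not rescue the missing vanishing statements.
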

\begin{proof}
The proposition is true if $H$ is solvable or very reductive, i.e.\ not
contained in any proper parabolic subgroup of $G$. Therefore we may
assume that $H$ is contained in a proper parabolic subgroup
$P\supsetneq B$ but not in any Borel subgroup of $G$. In this case
$HP^{u}/P^{u}$ is not contained in any proper parabolic
subgroup of $P/P^{u}$, which implies $H/H\cap P^u\cong HP^u/P^u$ reductive,
and hence $H^{u}=H \cap P^{u}$.

Denote by $L\supset T$ the standard Levi subgroup of $P$; according
to the decomposition $P = L \ltimes P^u$ we define two projections
$\pi_\ell\colon P\to L$ and $\pi_u\colon P\to P^u$. Notice that
$L$ is the quotient of $\SL(2) \times \GG_m$ by a finite central subgroup
scheme.

If $H$ contains a maximal torus of $G$ then it contains $T$ up to conjugation
by an element of $P$; we may then suppose that $H\supset T$. It follows
that $H$ contains $\pi_\ell(H)$, because
$\pi_\ell(h) \in \overline{ \{tht^{-1}\;|\; t\in T\} }$ for any $h\in H$.
This implies
that $H = \pi_\ell(H)\ltimes (H\cap P^{u})$, whence both
statements of the proposition.

We are left with the case where $H$ doesn't contain any maximal torus
of $G$, hence $\pi_\ell(H)\cong H/H^{u}$ is semisimple of rank $1$.
With this assumption, we show that the two statements of the
proposition follow from the vanishing of $H^1(L',R)$ and
$H^2(L',R)$ for certain subquotients $R$ of $P^{u}$. 

Consider the lower central series $P^u=P^{u}_0\supseteq P^u_1\supseteq
P^u_2 \supseteq\ldots$ of $P^u$, and the projection $\pi_1\colon
P\to P/P^u_1$. Then $\pi_1(H^{u})$ is a
$\pi_\ell(H)$-stable subgroup of $P^{u}/P^u_1$, which is a
vector group where $\pi_\ell(H)$ acts linearly by conjugation (see
\cite[Expos\'e XXVI: Proposition 2.1]{SGA3}).
If $H^2(\pi_\ell(H),\pi_1(H^{u}))=0$ then $\pi_1(H)$ is isomorphic to the
semidirect product of $\pi_\ell(H)$ and $\pi_1(H^u)$, thus it has a
Levi subgroup $L_1\subseteq P/P^u_1$. In particular
$L_1\cap\pi_1(H^{u})$ is trivial.

Let now $H_1=H\cap \pi_1^{-1}(L_1)$: this group has unipotent radical
contained in $P^u_1$, and satisfies $\pi_\ell(H)=\pi_\ell(H_1)$. We may
go on applying the same procedure to the group $H_1$ using the
projection onto the quotient $P/P^u_2$, provided that the corresponding
cohomology groups vanish. We obtain a sequence $H\supseteq H_1\supseteq
H_2\supseteq\ldots$ of subgroups of $H$ satisfying
$\pi_\ell(H)=\pi_\ell(H_i)$ and $H_i^{u}\subseteq P^u_i$ for all $i$.
If $n$ is big enough so that $P^u_n$ is trivial, the subgroup $H_n$ of
$H$ is reductive and isomorphic to $\pi_\ell(H)$, hence it is a Levi
subgroup of $H$.

Denote now by $L_H$ a Levi subgroup of $H$, and consider the map
$(\pi_1\circ\pi_u)|_{L_H}\colon L_H\to P^u/P^{u}_1$. It is a
$1$-cocycle of $L_H$ with values in the module $P^u/P^{u}_1$.
If the group $H^{1}(L_H,P^u/P^{u}_1)$ vanishes, then
$(\pi_1\circ\pi_u)|_{L_H}$ is a coboundary, whence $\pi_1(L_H)$ is
contained in $\pi_1(L)$ up to conjugation by an element of
$P^u/P^{u}_1$.

Therefore we may assume that $L_H\subset LP^{u}_1$. Proceeding as
above using the projections on $P/P^{u}_i$ for $i\in\{2,3,\ldots\}$,
provided that the needed
cohomology groups vanish, we obtain that $L_H$ is contained in $L$ up
to conjugation by an element of $P^{u}$.

To finish the proof we must show the vanishing of the cohomology groups
involved. We notice that for all of them we may replace the group with
its image under $\pi_\ell$, i.e.\ with $L'$. Then it remains
to show that $H^{n}(L',R)=0$, where
$n\in\{1,2\}$ and $R$ is an $L'$-stable additive subgroup
of $P^{u}_{j}/P^{u}_{j+1}$.
Using the long exact sequence of group cohomology the problem is
reduced to the case where $R$ is an $L'$-stable
subgroup of a simple subquotient $Q$ of
$P^{u}_{j}/P^{u}_{j+1}$.

If $Q$ is the trivial simple $L'$-module
then $L'$ acts trivially on $R$; this implies $H^1(L', R)=0$ because
$\SL(2)$ has no non-trivial abelian quotient.
In this case also $H^2(L',R)=0$ follows, using the long exact sequence of group
cohomology, the vanishing of $H^1(L', Q/R)$ by the above argument applied to
$Q/R$ instead of $R$, and
the vanishing of $H^2(L', Q)$ (see \cite[Theorem 1]{St10}).
We may now assume that $Q$ is not the trivial simple $L'$-module, and
that $R\neq\{0\}$.

From Lemma~\ref{lemma:sl2}, part (\ref{lemma:sl2:simple}) it follows that
$R=Q$. Moreover, by inspection on all rank $2$
root systems, the $L'$-module $Q$ (viewed as an $\SL(2)$-module)
is isomorphic to
$V(\omega_1)$, $V(2\omega_1)$ or $V(3\omega_1)$. Finally,
the vanishing of $H^{1}(L',Q)$ follows from 
\cite[Proposition~2.6]{St10}, and the vanishing of $H^{2}(L',Q)$
follows from \cite[Theorem 1]{St10}. Notice that here is the step
where we use the assumption $p> 2$, since e.g.\ $H^{1}(\SL(2),
V(\omega_1))\neq 0$ in characteristic $2$.
\end{proof}

\begin{lemma}\label{lemma:openorbits}
If $H\subseteq G$ has an open orbit on $G/P_{\alpha}$ for each simple
root $\alpha$ but not on $G/B$, then $|\Bnn(G/H)|=1$.
\end{lemma}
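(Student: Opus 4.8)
The plan is to argue by contradiction, assuming $|\Bnn(G/H)|>1$ and producing a codimension-one element of $\Bnn(G/H)$ whose complexity is forced to be $0$, which is incompatible with $c(G/H)\ge1$. First I would record that the hypothesis ``$H$ has no open orbit on $G/B$'' gives $c(G/H)\ge1$: recall that $c(G/H)$ equals the codimension in $G/B$ of a generic $H$-orbit, under the correspondence between $B$-orbits on $G/H$ and $H$-orbits on $G/B$, and this codimension is positive precisely when $H$ has no dense orbit on $G/B$.

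Next I would invoke Proposition~\ref{prop:transitive} to obtain a simple root $\alpha$ and an element $Z\in\Bnn(G/H)$ with $Z\ne G/H$, $\codim_{G/H}Z=1$ and $s_\alpha\cdot(Z)=(G/H)$. Because $r(Z)=r(G/H)$, the block $\Bn(G/H,P_\alpha)$ is of type $(U)$ in the table. I would then transfer to $G/B$: writing $Z'\subseteq G/B$ for the $H$-stable subset corresponding to $Z$, the equality $P_\alpha Z=G/H$ becomes $\pi_\alpha^{-1}(\pi_\alpha(Z'))=G/B$, that is $\pi_\alpha(Z')=G/P_\alpha$, while $\codim_{G/B}Z'=1$. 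Hence $\dim Z'=\dim G/P_\alpha$ and $\pi_\alpha|_{Z'}$ is dominant.

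Now the hypothesis ``$H$ has an open orbit on $G/P_\alpha$'' enters. I would pick $x$ in this open orbit, general enough that the type-$(U)$ description of the fibre $\pi_\alpha^{-1}(x)\cong\PP^1$ applies. Then $Z'\cap\pi_\alpha^{-1}(x)$ is the unique $\Phi(H_x)$-fixed point $z$, since a subgroup of $\PGL(2)$ whose unipotent radical is a maximal unipotent subgroup has exactly one fixed point on $\PP^1$. As $z$ is the only point of its fibre fixed by $H_x$, I get $H_z=H_x$, whence
\[
\dim H\cdot z=\dim H\cdot x=\dim G/P_\alpha=\dim Z'.
\]
Thus $H\cdot z$ is dense in $Z'$, so $c(Z)=0$, contradicting $c(Z)=c(G/H)\ge1$. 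This proves $|\Bnn(G/H)|=1$.

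The hard part will be the geometric bookkeeping of the middle step: carefully translating the combinatorial relation $s_\alpha\cdot(Z)=(G/H)$ into the fibrewise statement $\pi_\alpha(Z')=G/P_\alpha$ together with the type-$(U)$ identification of $Z'\cap\pi_\alpha^{-1}(x)$, and checking that $x$ can be chosen simultaneously in the open $H$-orbit of $G/P_\alpha$ and generic enough for that fibre analysis. Once these are secured, the equality $H_z=H_x$ and the dimension count close the argument at once.
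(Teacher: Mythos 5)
Your proof is correct and follows essentially the same route as the paper: both obtain a codimension-one element of $\Bnn(G/H)$ from Proposition~\ref{prop:transitive} and then derive a contradiction by comparing the complexity of that element with the $H$-orbit structure of $G/P_\alpha$ via the type-$(U)$ fibration. Your fibrewise argument (lifting a point of the open $H$-orbit on $G/P_\alpha$ to the unique $\Phi(H_x)$-fixed point and concluding $c(Z)=0$) is just the contrapositive, spelled out in more detail, of the paper's one-line assertion that $s_\alpha\cdot(G/H)=(Y)$ with $c(Y)>0$ forces the generic $H$-orbit on $G/P_\alpha$ to be non-dense.
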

\begin{proof}
If $|\Bnn(G/H)|>1$ then Proposition~\ref{prop:transitive} implies the
existence of an element $Y\in \Bnn(G/H)$ of codimension $1$ in $G/H$
satisfying  $s_{\alpha}\cdot(G/H)=(Y)$ for some simple root $\alpha$.
Since $H$ has no open orbit on $G/B$, we have $c(Y)=c(G/H)>0$.
Then $s_{\alpha}\cdot(G/H)=(Y)$ implies that the generic $H$-sheet of
$G/P_{\alpha}$ has positive complexity, which contradicts our assumptions.
\end{proof}

\begin{lemma}\label{lemma:sphercrit}
Let $P$ and $P_-$ be two opposite parabolic subgroups of $G$, set
$L=P\cap P_-$ and let $I$ be either $L$ or $L'$. Let also $H\subseteq P$
be a connected subgroup containing $I$. Then $H^u\subseteq P^u$; if
$P^u/H^u$ has an open $I$-orbit then $H$ has an open orbit on $G/P_-$,
and if $P^u/H^u$ is spherical under the action of $I$ then $H$ is a
spherical subgroup of $G$.
\item 
\end{lemma}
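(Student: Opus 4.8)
We have opposite parabolics $P, P_-$ with common Levi $L = P \cap P_-$, and $I \in \{L, L'\}$. Since $H \supseteq I$ and $H \subseteq P$, I need to establish three things: that $H^u \subseteq P^u$; that an open $I$-orbit on $P^u/H^u$ yields an open $H$-orbit on $G/P_-$; and that sphericity of $P^u/H^u$ under $I$ gives sphericity of $H$ in $G$. The geometric heart is the big-cell isomorphism $P^u \times L \times P_-^u \to G$ (open dense), equivalently the open immersion $P^u \hookrightarrow G/P_-$, $v \mapsto vP_-$, whose image is the open $P$-orbit on $G/P_-$. This identifies the open $P$-orbit with $P^u$ as a $P$-variety, where $L$ acts by conjugation and $P^u$ acts by left translation.

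\begin{proof}
Since $I \supseteq L'$ contains a maximal torus $T$ of $G$, and $I$ is reductive, $I$ is a Levi subgroup of $G$; in particular $H \supseteq I$ forces $H$ to contain $T$. As $H \subseteq P$ and $I$ is a maximal reductive subgroup of $P$ mapping isomorphically onto $P/P^u$ under $\pi_\ell$, the projection $\pi_\ell\colon H \to L$ is surjective with $I \subseteq \pi_\ell(H) \subseteq L$; hence $H = \pi_\ell(H) \ltimes (H \cap P^u)$ and the unipotent radical satisfies $H^u = H \cap P^u \subseteq P^u$.

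Consider the open immersion $j\colon P^u \hookrightarrow G/P_-$ sending $v \mapsto vP_-$, whose image is the open $P$-orbit; it is $P$-equivariant, where $P = L \ltimes P^u$ acts on $P^u$ by $\ell \cdot v = {}^\ell v$ for $\ell \in L$ and $v_0 \cdot v = v_0 v$ for $v_0 \in P^u$. Restricting the $H$-action and using $H = \pi_\ell(H) \ltimes H^u$, the $H$-orbit of the base point $eP_-$ corresponds under $j$ to the $H$-orbit of $e \in P^u$, which is exactly the image of $H^u$ acting on the left followed by $\pi_\ell(H)$ acting by conjugation. The quotient $P^u/H^u$ (with $H^u$ acting by left translation) carries the residual action of $\pi_\ell(H) \supseteq I$ by conjugation, and the natural map $P^u \to P^u/H^u$ intertwines the $\pi_\ell(H)$-actions. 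Therefore an open $I$-orbit on $P^u/H^u$ lifts to the statement that $\pi_\ell(H)H^u = H$ has an orbit of full dimension in $P^u$, i.e.\ an open orbit; via $j$ this is an open $H$-orbit on $G/P_-$, proving the second claim.

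For the sphericity statement, recall $H$ is spherical iff $B_G$ has an open orbit on $G/H$, equivalently iff $H$ has an open orbit on $G/B_G$; since $G/P_-$ is dominated by $G/B_G$, it suffices to produce an open $B_G$-orbit on the open $P$-cell $P^u \cong$ (open orbit in $G/P_-$) together with enough directions from a Borel of $L$. Choose a Borel $B_L \subseteq L$ with $B_G = B_L \ltimes P^u$ (after fixing compatible positive systems); then $B_G$-orbits on the big cell $P^u$ are governed by the $B_L$-action by conjugation. If $P^u/H^u$ is spherical under $I$, then $B_I := B_L \cap I$ has a dense orbit on $P^u/H^u$, hence $B_L$ has a dense orbit on $P^u/H^u$, and pulling back along $P^u \to P^u/H^u$ shows $B_G = B_L \ltimes P^u$ has a dense orbit on $P^u$. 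Composing with $j$ gives a dense $B_G$-orbit on $G/P_-$; since $G/P_-$ is a quotient of $G/B_G$ and the fibers are the spherical $P_-/B_G$, a standard fibration argument (a $B_G$-dense orbit on the base together with $B_{P_-}$-dense orbits on fibers) yields a dense $B_G$-orbit on $G/B_G$, i.e.\ $H$ is spherical in $G$.
\end{proof}

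The main obstacle I anticipate is the final fibration step: upgrading a dense $B_G$-orbit on $G/P_-$ to one on $G/B_G$. The clean way is to note $H$ spherical $\iff$ $c(G/H)=0$ and relate complexity on $G/P_-$ to that on $G/B_G$ via the fibration $G/B_G \to G/P_-$ with spherical fiber $P_-/B_G$; Vinberg's additivity of complexity under such fibrations (the base $P_-$ acting spherically on its flag fiber) gives $c(G/H)=0$. Formalizing this without circularity — and ensuring the $B_L$-versus-$B_I$ comparison genuinely needs only $I \supseteq L'$ (so that $B_L$ and $B_I$ differ by a central torus of $L$ acting trivially on $P^u/\text{center}$) — is where care is required; the condition $I \in \{L, L'\}$ is precisely what makes the conjugation action of $B_I$ on $P^u$ agree up to a central torus with that of $B_L$.
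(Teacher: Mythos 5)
Your second claim (an open $I$-orbit on $P^u/H^u$ gives an open $H$-orbit on $G/P_-$) is essentially sound and matches the paper's mechanism: the big cell $P/L\hookrightarrow G/P_-$ identifies $H$-orbits there with $H$-orbits on $P^u$, and the subgroup $I\ltimes H^u\subseteq H$ already sweeps out an open set once $I$ has an open orbit on $H^u\backslash P^u$ (this also repairs your unjustified use of $\pi_\ell(H)H^u=H$). The sphericity part, however, is genuinely broken. You must produce an open orbit of a Borel subgroup of $G$ on $G/H$; instead your paragraph produces a ``dense $B_G$-orbit on $G/P_-$'' and then a ``dense $B_G$-orbit on $G/B_G$''. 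Both of these exist for every Borel subgroup by the Bruhat decomposition, independently of $H$, so they cannot imply anything about $H$ --- the subgroup $H$ has silently dropped out of the argument. The missing step is the one the paper uses: take the opposite Borel $B_-=B_LP_-^u$ and the open subset $P_-^uP/H\cong P_-^u\times P/H$ of $G/H$; since $P_-^u\subseteq B_-$ acts simply transitively on the first factor, $B_-$ has an open orbit on $G/H$ if and only if $B_L$ has an open orbit on $P/H$. One must then identify $P/H$: it equals $P^u/H^u$ when $H\supseteq L$, but equals $P^r/H^u$ when $I=L'$ and $H\not\supseteq L$, and in the latter case the extra central torus $Z(L)^\circ\subseteq B_L$ is needed precisely to cover the extra $Z(L)^\circ$-direction of $P^r/H^u$ over $P^u/H^u$ --- it does \emph{not} ``act trivially'' as your closing remark suggests. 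This case distinction is absent from your write-up, and your proposed repair via the fibration $G/B_G\to G/P_-$ with fibre $P_-/B_G$ is again a fibration that does not involve $H$.

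There are also problems in your opening paragraph. For $I=L'$ the assertion that $I$ contains a maximal torus of $G$ is false (for a maximal parabolic of a rank-$2$ group, $L'$ is of type $\sA_1$ and contains only a one-dimensional torus), so neither $T\subseteq H$ nor the surjectivity of $\pi_\ell|_H$ follows; moreover the decomposition $H=\pi_\ell(H)\ltimes(H\cap P^u)$ presupposes a Levi decomposition of $H$, which in positive characteristic is not automatic (the paper devotes Lemma~\ref{lemma:leviex} to exactly this issue). None of this is needed: since $H\supseteq L'$ one has $HP^r=P$, so $H/(H\cap P^r)\cong P/P^r$ is reductive, whence $H^u\subseteq P^r$; as the unipotent part of $P^r=Z(L)^\circ P^u$ is $P^u$, this gives $H^u\subseteq P^u$ directly, which is how the paper argues.
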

\begin{proof}
The inclusion $H^u\subseteq P^u$ stems from the inclusion
$H^u\subseteq P^r$, which holds because $H P^r/P^r=P/P^r$
is reductive, therefore $H^u$ is in the kernel of the
projection $P\to P/P^r$.

Consider a Borel subgroup $B_L\subseteq L$. Then $B_-=B_L P_-^u$ is a Borel
subgroup of $G$, and its subgroup $P_-^u$ has an open orbit on $G/P$.
Then $L$ has an open orbit on $P/H$ if and only if $P_-$ has an
open orbit on $G/H$, and $B_L$ has an open orbit on $P/H$ if and
only if $B_-$ has an open orbit on $G/H$.

This completes the proof if $I=L$, because then $P/H=P^u/H^u$; in this
case the two last statements of the lemma are even equivalences. If
$I=L'$ but $H$ anyway contains $L$ then again $P/H=P^u/H^u$ and the two
last statements of the lemma follow from the case $I=L$. Hence we can
suppose $H\nsupseteq L$.

In this case $P/H=P^r/H^u$: if $L'$ has an open orbit on $P^u/H^u$
then $L$ has an open orbit on $P^r/H^u$ thus $P_-$ has an open orbit on $G/H$,
and if $P^u/H^u$ is spherical under the action of $L'$ then $P^r/H^u$ is
spherical under the action of $L$ thus $H$ is a spherical subgroup of $G$.
\end{proof}

\begin{proposition}\label{prop:other}
Let $G$ be semisimple of rank $2$ and $p\geq 3$. Let $P\supsetneq B$ be
a proper parabolic subgroup of $G$ and $H$ a connected non-reductive
subgroup of $P$ containing a Levi subgroup $L$ of $P$. Then $H$ is
spherical or $|\Bnn(G/H)|=1$.
\end{proposition}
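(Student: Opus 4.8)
The plan is to transfer the whole question to the $L$-module $V:=P^u/H^u$ and to separate a \emph{spherical} branch from a \emph{$|\Bnn(G/H)|=1$} branch. Write $\gamma$ for the simple root with $P=P_\gamma$ and $\delta$ for the other simple root. Since $L\subseteq H\subseteq P=L\ltimes P^u$ and $P^u$ is normal in $P$, we have $H=L\ltimes(H\cap P^u)$ with unipotent radical $H^u=H\cap P^u\subseteq P^u$; thus $H$ is non-reductive precisely when $H^u\neq\{e\}$, and $H^u=P^u$ means $H=P$, which is spherical because $G/P$ is a flag variety. So I may assume $\{e\}\neq H^u\subsetneq P^u$. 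As $L$ normalises both groups, $H^u$ and $P^u$ are $L$-stable and $V$ is a nonzero vector group on which $L'\cong\SL(2)$ acts linearly. Reading the lower central series of $P^u$ and the $L'$-action off the commutator relations of \cite{SGA3} (as in Lemma~\ref{lemma:U}) and applying Lemma~\ref{lemma:sl2} to the $L'$-stable additive subgroups of each graded piece, I would enumerate the admissible $H^u$.

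The dichotomy comes from Lemma~\ref{lemma:sphercrit} with $I=L$, where $P/H=P^u/H^u$ and the last two assertions are equivalences: $H$ is a spherical subgroup of $G$ if and only if $V$ is $L$-spherical, and $H$ has an open orbit on $G/P_-$ if and only if $V$ has an open $L$-orbit. If $V$ is $L$-spherical we are done. Otherwise $H$ is not spherical, i.e.\ it has no open orbit on $G/B$, and I would apply Lemma~\ref{lemma:openorbits}; it then remains to exhibit an open $H$-orbit on each $G/P_{\alpha_i}$. One of these is $G/P_\gamma$, which is $G$-isomorphic to $G/P_-$ and hence is covered by the equivalence above as soon as $V$ is shown to be $L$-quasihomogeneous; the orbit on the remaining parabolic $G/P_\delta$ needs direct work.

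For $G$ of type $\sA_1\times\sA_1$, $\sA_2$ or $\sB_2$ the enumeration already closes the spherical branch: here every admissible $H$ is $P$, or (in type $\sB_2$ with $P=P_{\alpha_2}$ the long-root parabolic) the group $L\,U_{2\alpha_1+\alpha_2}$ with $V\cong V(\omega_1)$, on which a Borel subgroup of $L$ has a dense orbit; in all these cases $H$ is spherical, because Lemma~\ref{lemma:sl2}(\ref{lemma:sl2:simple}) rules out any intermediate $L'$-stable $H^u$. The real content is type $\sG_2$, where non-spherical admissible $H$ survive; the prototype is $H=L\,U_{3\alpha_1+2\alpha_2}$, for which $V\cong V(3\omega_1)$ carries a dense but non-spherical $L$-orbit. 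For each surviving $H$ I would first confirm that $V$ is $L$-quasihomogeneous, yielding the open orbit on $G/P_\gamma$, and then establish the open orbit on $G/P_\delta$ by the Bruhat-cell bookkeeping of Proposition~\ref{prop:levi}: decompose $G/P_\delta$ into its cells, discard those of too small dimension, and compute on the big cell, using the explicit $\sG_2$ commutator formulas, that the generic $H$-stabiliser is finite. The characteristic-$3$ phenomenon for $\sG_2$ I would dispatch as in Proposition~\ref{prop:levi}, carrying the short-root parabolic to the long-root one by the special isogeny.

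The main obstacle is precisely this $\sG_2$ computation on the ``other'' parabolic $G/P_\delta$, together with the bookkeeping that precedes it: determining which $L'$-stable unipotent subgroups $H^u$ actually occur (where the nilpotency class of $P^u$ and the non-simple case Lemma~\ref{lemma:sl2}(\ref{lemma:sl2:kpsimple}) for the graded pieces come in), and verifying in every surviving non-spherical case that $V$ is $L$-quasihomogeneous, so that the hypotheses of Lemma~\ref{lemma:openorbits} are genuinely satisfied.
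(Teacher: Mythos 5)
Your proposal follows essentially the same route as the paper: reduce everything to the $L$-module $P^u/H^u$, classify the possible $H^u$ via Lemma~\ref{lemma:sl2} applied to the graded pieces of the lower central series of $P^u$, split into a spherical branch via Lemma~\ref{lemma:sphercrit} and a $|\Bnn(G/H)|=1$ branch via Lemma~\ref{lemma:openorbits}, and settle the surviving non-spherical $\sG_2$ cases (where $H^u$ is the one-dimensional summand of $(P^u)'$ and $P^u/H^u$ is $L$-quasihomogeneous but not $L$-spherical) by a Bruhat-cell computation of the generic stabilizer on the other minimal parabolic. The only divergences are cosmetic: the paper dispatches types $\sA_1\times\sA_1$, $\sA_2$ and $\sB_2$ in one line because $L$ itself is already spherical there (Proposition~\ref{prop:levi}) and $H\supseteq L$, and it treats $p=3$ for $P_{\alpha_1}$ in type $\sG_2$ by analyzing the reducible module $P^u/(P^u)'$ directly rather than via the special isogeny.
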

\begin{proof}
If $G$ has type $\mathsf A_1\times \mathsf A_1$, $\mathsf A_2$ or
$\mathsf B_2$ then $L$ is spherical (Proposition~\ref{prop:levi}). This
implies that $H$ is also spherical.

So we suppose that $G$ has type $\mathsf G_2$, and also that $L\supset
T$. Write for brevity $\alpha=\alpha_1$ and $\beta=\alpha_2$.

Since $H\supseteq L$, $P$ is
minimal among the parabolic subgroups containing $H$: it follows that
$H^u\subseteq P^u$. The quotient $P^{u}/(P^{u})'$ is a vector group
where $L$ acts linearly, and $H^{u}(P^{u})'/(P^{u})'$ is an additive
subgroup. Since the center of $L$ acts on $P^{u}/(P^{u})'$ non-trivially
by homotheties and
$H^{u}$ is $L$-stable, the group $H^{u}(P^{u})'/(P^{u})'$ is an
$L$-submodule of $P^{u}/(P^{u})'$. 

Moreover, we may suppose that it is a proper submodule, otherwise
$H^{u}$ is the whole $P^{u}$, and hence $H=P$ is spherical. Notice that
here $(P^{u})'$ is abelian, so also a vector group where $L$ acts
linearly. Let us then recall the structure of the
$L$-modules under consideration (viewed as $\SL(2)\times \GG_m$-modules): 
\[
\begin{array}{rcl}
P_\alpha^u/(P_\alpha^{u})' & \cong & \Sym^3(V(\omega_1))\otimes
V(\beta|_{\GG_m}),\\
(P_\alpha^{u})' & \cong & V(0)\otimes V(2\beta|_{\GG_m}),
\end{array}
\]
and
\[
\begin{array}{rcl}
P_\beta^u/(P_\beta^{u})' & \cong & V(\omega_1)\otimes
V(\alpha|_{\GG_m}),\\
(P_\beta^{u})' & \cong & \big(V(0)\otimes V(2\alpha|_{\GG_m})\big) \oplus
\big(V(\omega_1)\otimes V(3\alpha|_{\GG_m})\big).
\end{array}
\]

If now $p>3$ or if $P=P_\beta$ then $H^{u}(P^{u})'/(P^{u})'$ is
trivial, because with either of these two assumptions $P^{u}/(P^{u})'$
is a simple $L$-module. For $p=3$ and $P=P_\alpha$, the
$L$- module $P^{u}/(P^{u})'$ contains a unique nontrivial and proper
$L$-submodule (of dimension $2$). However, if $H^{u}(P^{u})'/(P^{u})'$
contains this submodule then $P^u/H^u$ is spherical under the action of
$L$, therefore $H$ is spherical thanks to Lemma~\ref{lemma:sphercrit}.

As a consequence, we may suppose from now on that $H^u\subseteq (P^u)'$.
Now $(P^{u})'$ as an $\SL(2)$-module it is either $V(0)$ or
$V(0)\oplus V(\omega_1)$, and we observe that in both cases $H^u$ is an
$L$-submodule. This is obvious in the first case since
$H^u$ is connected, and in the second case it follows from
Lemma~\ref{lemma:sl2}, part (\ref{lemma:sl2:kpsimple}).

In addition, if $P=P_\beta$
and $H^u$ contains the $2$-dimensional $L$-submodule of $(P^u)'$, then again
$P^u/H^u$ is $L$-spherical, so $H$ is spherical in $G$.

This leaves only one subgroup $H$ for each of the two choices of $P$,
namely the one
where $H^u$ is equal to the $1$-dimensional summand of $(P^u)'$. We claim
that in both cases $H$ has an open orbit on $G/P_\alpha$ and $G/P_\beta$.
This proves the proposition thanks to Lemma~\ref{lemma:openorbits}, since
$H$ has no open orbit on $G/B$ for dimension reasons.

Let us prove the claim for $P=P_\alpha$, and consider first
$G/P_\alpha$. Thanks to Lemma~\ref{lemma:sphercrit}, to prove that
$H\subset P_\alpha$ has an open orbit on $G/P_\alpha$ it is
enough to prove that $L$ has an open orbit on $P_\alpha^u/H^u$.
Both $L$ and $P_\alpha^u/H^u\cong
\Sym^3(V(\omega_1))\otimes V(\beta|_{\GG_m})$ have dimension $4$, so
our claim follows from the fact that the point $e_1 e_2 (e_1+e_2)$ has
finite stabilizer in $L$, where $e_1,e_2$ is the standard basis of
$V(\omega_1)\cong\Bbbk^{2}$.

We show now that $H\subset P_\alpha$ has an open orbit on $G/P_\beta$.
Notice that a Levi subgroup of $P_\beta$ and a Levi subgroup of $P_\alpha$
are not conjugated in $G$, whence the group $({}^{g}P_\beta \cap
H)^{\circ}$ is solvable for all $g\in G$. It is then contained in a Borel
subgroup of $H$. Since the flag variety of $H$ has dimension $1$ and
$B\cap H$ is a Borel of $H$, the inclusion
\[
({}^{g}P_\beta \cap H)^{\circ} \subseteq B\cap H
\]
holds for all $g$ such that $gP_\beta\in D$, where $D$ is a subvariety of
$G/P_\beta$ of codimension $1$. If $H$ has no open orbit on $G/P_\beta$
then $({}^{g}P_\beta \cap H)$
has positive dimension for all $g\in G$, therefore for all $g$ such that
$gP_\beta\in D$ we have
\begin{equation}\label{eqn:posdim}
\dim({}^{g}P_\beta\cap B \cap H) >0.
\end{equation}
We prove that this is impossible, by checking that that the intersection
of the locus where (\ref{eqn:posdim}) is satisfied with any Schubert cell
of $G/P_\beta$ has codimension at least $2$ in $G/P_\beta$.

We first consider the open Schubert cell, i.e.\ $g$ is
of the form $g=uw_0$ with $u\in U$. Then
\[
({}^{g}P_\beta \cap B) \cap H = ({}^{uw_0}P_\beta \cap B)\cap H =
{}^{u}(T U_\beta)\cap H
\]
We fix an isomorphism of the open Schubert cell with $\Bbbk^{5}$
in such a way that 
\[
u = u_{\alpha+\beta}(x_1)u_{2\alpha+\beta}(x_2)u_{3\alpha+\beta}(x_3)
\]
where the parameters $x_1$, $x_2$ and $x_3$ are coordinate
functions of $\Bbbk^{5}$. This is possible since $U_\alpha
U_{3\alpha+2\beta}\subset H$.
An elementary computation shows that ${}^{u}(T 
U_\beta)\cap H$ is infinite only if two of $x_1,x_2,x_3$ are zero.

Consider now the codimension $1$ Schubert cell, i.e.\ $g$ is of the form
$uw_0s_\alpha$. Then
\[
({}^{g}P_\beta \cap B) \cap H = ({}^{uw_0s_\alpha}P_\beta \cap B)\cap H =
{}^{u}(T U_\alpha U_{3\alpha+\beta})\cap H
\]
Here we can assume that
\[
u = u_{\beta}(x_1)u_{\alpha+\beta}(x_2)u_{2\alpha+\beta}(x_3)
\]
where $x_1,x_2,x_3$ are coordinate functions on the $4$-dimensional
Schubert cell. Consider also elements $t\in T$,
$u_{\alpha}(y_1)$ and $u_{3\alpha+\beta}(y_2)$ with $y_1,y_2
\in \Bbbk$. Then ${}^{u}(tu_\alpha(y_1)u_{3\alpha+\beta}(y_2))$
is equal to
\[
t
u_{\beta}(A_\beta)
u_{\alpha+\beta}(A_{\alpha+\beta})
u_{2\alpha+\beta}(A_{2\alpha+\beta})
u_{3\alpha+\beta}(A_{3\alpha+\beta})
u_{3\alpha+2\beta}(A_{3\alpha+2\beta})
u_{\alpha}(A_\alpha)
\]
where
\[
\begin{array}{lcl}
A_\beta &=& (b - 1)x_1\\
A_{\alpha+\beta} &=& (ab-1)x_2 + x_1y_1\\
A_{2\alpha+\beta} &=& (a^{2}b-1)x_3+2x_2y_1-x_1y_1^{2}\\
A_{3\alpha+\beta} &=& y_2-6x_2y_1^{2}+x_1y_1^{3}
\end{array}
\]
and $a = \alpha(t^{-1})$, $b = \beta(t^{-1})$. This element is in $H$ only
if $A_\beta = A_{\alpha+\beta} = A_{2\alpha+\beta} = A_{3\alpha+\beta}
=0$. For general $x_1$, $x_2$ and $x_3$ the resulting system has only
finitely many solutions in $a,b,y_1,y_2$. This finishes the proof that 
(\ref{eqn:posdim}) cannot be satisfied for $gP_\beta$ lying on a 
subvariety of $G/P_\beta$ of codimension $1$.

Finally, we prove the claim for $P=P_\beta$ using the same
method. Here $H\subset P_\beta$, and we consider first $G/P_\beta$.
The quotient $P_\beta^u/H^u$ is abelian and, as an
$\SL(2)\times \GG_m$-module, isomorphic to the sum
\[
(V(\omega_1)\otimes V(\alpha|_{\GG_m})) \oplus
\big(V(\omega_1)\otimes V(3\alpha|_{\GG_m})\big) \cong
\Bbbk^{2}\oplus\Bbbk^{2}.
\]
The group $L$ has the open orbit 
\[
\{(v,w)\in \Bbbk^2\oplus \Bbbk^2\,|\, v\neq0\neq w,\,
\Bbbk v\neq \Bbbk w\},
\]
hence $H$ has an open orbit on $G/P_\beta$.

To show that $H\subset P_\beta$ has an open orbit on $G/P_\alpha$
we proceed as above, showing that the locus where $gP_\alpha$
satisfies
\[
\dim({}^{g}P_\alpha\cap B \cap H) >0
\]
has codimension at least $2$ in $G/P_\alpha$.

Suppose first that $gP_\alpha$ is in the open Schubert cell,
i.e.\ $g$ is of the form $uw_0$ for $u\in U$. Then
\[
({}^{g}P_\alpha \cap B) \cap H = ({}^{uw_0}P_\alpha \cap B)\cap H
= {}^{u}(T U_\alpha)\cap H
\]
We can write
\[
u = u_{\alpha+\beta}(x_1)u_{3\alpha+\beta}(x_2)u_{3\alpha+2\beta}(x_3)
\]
since here $U_\beta U_{2\alpha+\beta}\subset H$. Again, the intersection
${}^{u}(T U_\alpha)\cap H$ is infinite only if two
of $x_1$, $x_2$, $x_3$ are zero.

Consider now the codimension $1$ Schubert cell, i.e.\ $g$ is of
the form $uw_0s_\beta$. Then
\[
({}^{g}P_\alpha \cap B) \cap H = ({}^{uw_0s_\beta}P_\beta \cap B)\cap H =
{}^{u}(T U_\beta U_{\alpha+\beta})\cap H
\]
Here we can assume that
\[
u = u_{\alpha}(x_1)u_{3\alpha+\beta}(x_2)u_{3\alpha+2\beta}(x_3)
\]
where $x_1,x_2,x_3$ are coordinate functions on the $4$-dimensional
 Schubert cell.
Consider also elements $t\in T$,
$u_{\beta}(y_1)$ and $u_{\alpha+\beta}(y_2)$ with $y_1,y_2
\in \Bbbk$.
Then ${}^{u}(tu_\beta(y_1)u_{\alpha+\beta}(y_2))$ is equal to
\[
t
u_{\alpha}(A_\alpha)
u_{3\alpha+\beta}(A_{3\alpha+\beta})
u_{3\alpha+2\beta}(A_{3\alpha+2\beta})
u_{2\alpha+\beta}(A_{2\alpha+\beta})
u_{\beta}(A_\beta)
u_{\alpha+\beta}(A_{\alpha+\beta})
\]
where
\[
\begin{array}{lcl}
A_\alpha &=& (a-1)x_1 \\
A_{3\alpha+\beta} &=& (a^{3}b -1) x_2 + x_1^{3}y_1 - 3x_1^{2}y_2\\
A_{3\alpha+2\beta} &=& (a^{3}b^{2}-1)x_3 -x_2y_1 -
9x_1^{2}y_1y_2 - x_1^{3}y_1^{2}-3x_1y_2^{2}\\
A_{\alpha+\beta} &=& x_1y_1 + y_2
\end{array}
\]
and $a = \alpha(t^{-1})$, $b = \beta(t^{-1})$.
This element is in
$H$ only
if $A_\alpha = A_{3\alpha+\beta} = A_{3\alpha+2\beta} =
A_{\alpha+\beta} =0$.
For general $x_1$, $x_2$ and $x_3$ the resulting system has
finitely many solutions in $a,b,y_1,y_2$. This finishes the proof.
\end{proof}

\begin{corollary}\label{cor:other}
Theorem~\ref{thm:waction} holds for every connected subgroup $H$ 
of $G$ such that $H$ is neither solvable nor very reductive, and
does not satisfy $L'\subseteq H\subseteq L$ where $P\supsetneq B$
is a proper parabolic subgroup of $G$ and $L$ is a Levi of $P$.
\end{corollary}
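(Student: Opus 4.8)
\emph{Proof strategy.} The plan is to enlarge $H$ to a subgroup $K$ that contains a full Levi subgroup of a proper parabolic, apply Proposition~\ref{prop:other} to $K$, and then transport the conclusion back to $H$ along Lemma~\ref{lemma:normtorus}. Throughout I use that $\Bnn(G/H)$ depends only on the conjugacy class of $H$, so I may conjugate freely. First I would fix the shape of $H$. Since $H$ is not very reductive it lies in a proper parabolic $P\supsetneq B$; fix a Levi $L\subseteq P$, so that $L$ has semisimple rank $1$ and $Z(L)^{\circ}$ is a one-dimensional central torus. By Lemma~\ref{lemma:leviex} a Levi subgroup $L_H$ of $H$ satisfies $L_H\subseteq L$, and as $H$ is not solvable, $L_H$ has semisimple rank $1$; since $L$ too has semisimple rank $1$, $L_H$ is conjugate in $L$ to $L'$, and after conjugating I may assume $L'=L_H\subseteq H\subseteq P$. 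Here $H$ cannot be reductive: otherwise $H=L'\cdot Z(H)^{\circ}$ with $Z(H)^{\circ}$ centralizing $L'$, forcing $Z(H)^{\circ}\subseteq Z(L)^{\circ}$ and $L'\subseteq H\subseteq L$, which is excluded. Hence $H=L'\ltimes H^{u}$ with $\{e\}\ne H^{u}\subseteq P^{u}$.

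The heart of the argument is to upgrade the $L'$-stability of $H^{u}$ to $L$-stability, that is, stability under $Z(L)^{\circ}$; this is what makes $K:=L\ltimes H^{u}$ a subgroup with $H$ normal in it. When $G$ has type $\mathsf{A}_1\times\mathsf{A}_1$, $\mathsf{A}_2$ or $\mathsf{B}_2$, the module $P^{u}$ decomposes over $L'$ into pairwise non-isomorphic simple summands with distinct $Z(L)^{\circ}$-weights, so every $L'$-submodule is automatically $Z(L)^{\circ}$-stable and the $L'$-stable connected unipotent subgroups of $P^{u}$ are visibly $L$-stable. The real difficulty is type $\mathsf{G}_2$, above all $P=P_{\beta}$, where $P^{u}$ contains two copies of $V(\omega_1)$ and a ``diagonal'' $L'$-submodule mixing them would fail to be $Z(L)^{\circ}$-stable. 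I would exclude such diagonals by the commutation relations of \cite[Expos\'e XXIII, Proposition~3.4.1]{SGA3}, exactly as in Lemma~\ref{lemma:U}: writing $\alpha=\alpha_1$, $\beta=\alpha_2$, if $H^{u}$ surjects onto $P^{u}/(P^{u})'$ then the brackets $[U_{\alpha},U_{\alpha+\beta}]$, $[U_{\alpha},U_{2\alpha+\beta}]$ and $[U_{\alpha+\beta},U_{2\alpha+\beta}]$ force $(P^{u})'\subseteq H^{u}$, so $H^{u}=P^{u}$; otherwise $H^{u}\subseteq(P^{u})'$ is an $L'$-submodule of an abelian group, hence a sum of root spaces. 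In both cases $H^{u}$ is $T$-stable, and therefore $L$-stable. (For $p=3$ in type $\mathsf{G}_2$ I would pass from $P_{\alpha}$ to $P_{\beta}$ through the special isogeny, as is done elsewhere in the paper.)

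Once $L$-stability is established, $K=L\ltimes H^{u}$ is a connected non-reductive subgroup of $P$ containing the Levi $L$, and $H=L'\ltimes H^{u}$ is normal in $K$ with $K/H\cong L/L'$ a torus, in particular diagonalizable. Proposition~\ref{prop:other} then yields that $K$ is spherical or $|\Bnn(G/K)|=1$, and Lemma~\ref{lemma:normtorus} furnishes a $\Wt$-equivariant bijection $\Bnn(G/K)\to\Bnn(G/H)$. If $K$ is spherical, Theorem~\ref{thm:waction} is already known for $K$ by \cite[Theorem 4.2]{Kn95}, so the $\Wt$-action on $\Bnn(G/K)$ descends to a $W$-action, which the bijection transports to $\Bnn(G/H)$; if $|\Bnn(G/K)|=1$ then $|\Bnn(G/H)|=1$ and the descent is trivial. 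Either way Theorem~\ref{thm:waction} holds for $H$. The step I expect to be the main obstacle is the $L$-stability of $H^{u}$: a priori it is only $L'$-stable, and it is precisely the bracket relations in $P^{u}$ that rule out the diagonal subgroups which would otherwise obstruct both the definition of $K$ and the hypotheses of Lemma~\ref{lemma:normtorus}.
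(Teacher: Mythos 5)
Your overall architecture is exactly the paper's: reduce to showing that $L^r=Z(L)^{\circ}$ normalizes $H$ (equivalently, that $H^{u}$ is $L$-stable), form $K=L^rH\supseteq L$, apply Proposition~\ref{prop:other} to $K$, and transport the conclusion back along the $\Wt$-equivariant bijection of Lemma~\ref{lemma:normtorus}. The final assembly is correct. However, there is a genuine gap at the crux of the case analysis: you repeatedly assert that an $L'$-stable \emph{closed connected subgroup} of $P^{u}$ (or of a subquotient such as $(P_{\beta}^{u})'\cong V(0)\oplus V(\omega_1)$) is ``visibly'' a submodule, i.e.\ ``a sum of root spaces.'' In characteristic $p>0$ a closed connected additive subgroup of a vector group need not be a linear subspace (think of graphs of additive polynomials such as $x\mapsto x^{p}$), so $L'$-stability does not formally upgrade to $T$-stability. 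Establishing exactly this upgrade is the content of Lemma~\ref{lemma:sl2}, which you never invoke; part~(\ref{lemma:sl2:kpsimple}) of that lemma --- needed precisely for the summand $V(0)\oplus V(\omega_1)$ you encounter in $(P_\beta^u)'$ --- is where the hypothesis $p\neq 2$ actually enters the argument. Since the entire point of the paper is the positive-characteristic subtlety, this step cannot be waved through; with Lemma~\ref{lemma:sl2} cited, your argument closes.

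Two smaller points. First, the case $G=\mathsf G_2$, $P=P_{\alpha_1}$, $p>3$ is not addressed at all (there $P^u$ is non-abelian with $P^u/(P^u)'\cong\Sym^3(V(\omega_1))$ simple, and one argues as in your $P_\beta$ case). Second, for $p=3$, $P=P_{\alpha_1}$ you propose the special isogeny in place of the paper's direct analysis of the composition series $\Sym^3(V(\omega_1))\supset V_1\supset 0$; this is a legitimately different and arguably cleaner route, consistent with how the paper uses the special isogeny in Proposition~\ref{prop:levi}, but you must check that the induced bijection $\Bnn(G/H)\to\Bnn(G/\pi(H))$ intertwines the $\Wt$-actions (with the two simple reflections swapped) and that $\pi(H)$ again satisfies the hypotheses of the corollary. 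Note also that the paper's own analysis in this subcase sometimes concludes only that $H$ is spherical, without $L$-stability of $H^u$, so your blanket claim that $H^u$ is always $L$-stable is stronger than what the direct argument yields; the isogeny detour is what lets you avoid having to prove it.
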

\begin{proof}
Since $H$ is not solvable nor very reductive, it is contained in a
proper parabolic subgroup $P$, which may be assumed to properly
contain $B$. Denote by $L$ a Levi subgroup of $P$. Thanks to
Lemma~\ref{lemma:leviex}, $H$ has a Levi subgroup
inside $L$ and containing $L'$. Moreover, the inclusion
$H^{u}\subseteq P^{u}$ holds thanks to Lemma~\ref{lemma:sphercrit},
and $H^u$ is not trivial thanks to our assumptions.

To prove the corollary it is enough to show that $H$ is spherical
or $L^r$ normalizes $H$. Indeed, if $L^r$ normalizes $H$, then by
Lemma~\ref{lemma:normtorus} the sets $\Bnn(G/H)$ and $\Bnn(G/L^rH)$
are $\Wt$-equivariantly isomorphic. In this case the corollary stems
from Proposition~\ref{prop:other}, since $L^rH$ contains $L$.

Suppose first that $P^{u}$ is abelian and a trivial or
simple $L'$-module, which is the case if $G$ has type
$\mathsf A_1\times\mathsf A_1$ or $\mathsf A_2$, or $G$ has
type $\mathsf B_2$ and $P=P_{\alpha_2}$.
Then by Lemma~\ref{lemma:sl2}, part (\ref{lemma:sl2:simple}), we
conclude that $H^u$ is a $L$-stable submodule of $P^u$, which implies
that $H$ is normalized by $L^r$.

If $G$ has type $B_2$ and $P=P_{\alpha_1}$, then $P^u/(P^u)'$ under the
action of $L'$ is the
$\SL(2)$-module $V(\omega_1)$ and the image $K$ of $H^u$ in
the quotient $P^u/(P^u)'$ is an $L'$-stable additive subgroup.
Lemma~\ref{lemma:sl2}, part
(\ref{lemma:sl2:simple}) implies that $K$ is
either the whole $V(\omega_1)$ or trivial. In the first case then
$H^u=P^u$. In the second case $H^u\subseteq (P^u)'$, and since the
former is non-trivial while the latter is one-dimensional we have
$H^u=(P^u)'$. In any case $H$ is both spherical and normalized by
$L^r$, and this concludes the proof in the case $G$ is not of
type $\mathsf G_2$.

We may now suppose that $G$ has type $\mathsf G_2$.
We start with the case $P=P_{\alpha_1}$.

If $p=3$ then $P^u$ is abelian, therefore $\SL(2)$-isomorphic
to $\Sym^3(V(\omega_1))\oplus V(0)$. The first summand has a
composition series $\Sym^3(V(\omega_1))\supseteq V_1 \supseteq \{0\}$ of
$\SL(2)$-submodules with both $V_1$ and $V/V_1$ simple
and $2$-dimensional.

Consider the projection $K$ of $H^{u}$ in $\Sym^3(V(\omega_1))$.
By Lemma~\ref{lemma:sl2}, part (\ref{lemma:sl2:simple}), the
intersection $K\cap V_1$ and the projection of $K$ on $V/V_1$ are
$\SL(2)$-submodules, therefore either $0$- or $2$-dimensional.

If both are $0$-dimensional then $H^{u}$ is nontrivial and
contained in $V(0)$, therefore $H^u = V(0)$. This implies that $H$ is
normalized
by $L^{r}$. Suppose that at least one is $2$-dimensional, and consider
the projection $J$ of $H^u$ on $V(0)\subset P^u$. Then either $J$ is
$V(0)$, which implies that $P^u/H^u$ is $\SL(2)$-spherical and
thus $H$ is a spherical subgroup of $G$, or $J$ is trivial,
which implies that $L^r$ normalizes $H$.

If $p>3$ then
$V=P^u/(P^u)'\cong\Sym^3(V(\omega_1))$ is irreducible under the
action of $\SL(2)$. It follows from Lemma~\ref{lemma:sl2}, part
(\ref{lemma:sl2:simple}) that the projection
of $H^{u}$ on $V$ is an $\SL(2)$-submodule, so either trivial,
and hence $H$ is normalized by $L^r$, or the full $V$, which
implies that $H^{u}=P^{u}$ and that $H$ is spherical.

We deal now with the case $P=P_{\alpha_2}$.
If $H^u\subseteq (P^u)'$, since the latter is abelian and the
sum of a trivial and a $2$-dimensional simple $\SL(2)$-module,
we conclude from Lemma~\ref{lemma:sl2}, part (\ref{lemma:sl2:kpsimple})
that $H^{u}$ is an $\SL(2)$-submodule, thus normalized by $L^r$.

Otherwise $H^u$ projects surjectively on $P^u/(P^u)'$, because the latter
is a simple $\SL(2)$-module. Then $H^{u}=P^{u}$ and $H$ is a spherical
subgroup of $G$.
\end{proof}

\end{document}